\documentclass[reqno]{amsart}
\usepackage{mathrsfs}
\usepackage{amsmath}
\usepackage{amsthm}
\usepackage{amsfonts}
\usepackage{amssymb}
\usepackage{url}
\usepackage{enumerate}
\usepackage[pdftex,bookmarks=true]{hyperref}
  \usepackage[usenames, dvipsnames]{color}
\usepackage{verbatim}

%Melanie added for use when latexing.  You can comment out if you want.
\usepackage{pdfsync}

\newcommand\R{{\mathbf{R}}}

\renewcommand\P{{\mathbf{P}}}

\newcommand\dist{{\operatorname{dist}}}
\newcommand\Z{{\mathbf{Z}}}

\newcommand\I{{\mathbf{I}}}

% Probability

% \newcommand\lesssim{<_\sim}
%\newcommand\endprf{\hfill  {\vrule height6pt width6pt depth0pt}\medskip}
%\newenvironment{proof}{\noindent {\bf Proof} }{\endprf\par}
%\newcommand\emph#1{{\it #1}}
%\newcommand\textbf#1{{\bf #1}}

\newcommand\Bv{{\mathbf v}}
\newcommand\Bw{{\mathbf w}}
\newcommand\Bx{{\mathbf x}}
\newcommand\By{{\mathbf y}}

\newcommand\BP{{\mathbf P}}

\newcommand\BR{{\mathbf R}}
\newcommand\BS{{\mathbf S}}

%cal letter

%\renewcommand\CD{{\mathcal D}}

%number theory

% tilde

% special

\newcommand\eps{\varepsilon}

%Macros for constants

%Some macros for constants in appendix

%Macros for variables

%Other macros

% \swapnumbers
% \pagestyle{headings}

%%Hoi's page formatting

\parindent = 0 pt
\parskip = 10 pt
%
%\textwidth=6.5in
%\oddsidemargin=0in
%\evensidemargin=0in

%%Melanie's suggestion is to use this instead
\usepackage{fullpage}

\theoremstyle{plain}
  \newtheorem{theorem}[subsection]{Theorem}

    \newtheorem{proposition}[subsection]{Proposition}
  
  \newtheorem{lemma}[subsection]{Lemma}
  \newtheorem{corollary}[subsection]{Corollary}

%Number equations with sections
%\numberwithin{equation}{section}

\theoremstyle{definition}
  \newtheorem{definition}[subsection]{Definition}

\def \EE {\mathcal{E}}

\begin{document}

\title{Eigenvalue Gaps of Random Perturbations of Large Matrices}

\author{Kyle Luh}
\address{Department of Mathematics\\ University of Colorado Boulder}
\email{kyle.luh@colorado.edu}
\thanks{K. Luh was supported in part by the Ralph E. Powe Junior Faculty Enhancement Award.}

\author{Ryan Vogel}
\address{Department of Mathematics\\ University of Colorado Boulder}
\email{ryan.vogel@colorado.edu}
\thanks{R. Vogel was supported in part by the CU Boulder REU program.}

\author{Alan Yu}
\address{Department of Mathematics\\ University of Colorado Boulder}
\email{alan.yu@colorado.edu}
\thanks{A. Yu was supported in part by the CU Boulder REU program.}

%\author{}
%\address{}  
%\email{}

%\author{}
%\address{}  
%\email{}

%\subjclass[2010]{15B52, 60B20}

\begin{abstract} 
The current work applies some recent combinatorial tools due to Jain to control the eigenvalue gaps of a matrix $M_n = M + N_n$ where $M$ is deterministic, symmetric with large operator norm and $N_n$ is a random symmetric matrix with subgaussian entries.  One consequence of our tail bounds is that $M_n$ has simple spectrum with probability at least $1 - \exp(-n^{2/15})$ which improves on a result of Nguyen, Tao and Vu in terms of both the probability and the size of the matrix $M$.  
 \end{abstract}

\maketitle

\section{Introduction }
The gaps between eigenvalues of a random matrix have occupied probablists since the introduction of random matrix theory \cite{wigner1951statistical, wishart1928generalised}. These gap statistics have been studied for a large set of models and with a wide variety of techniques (see \cite{forrester2000wigner, erdos2010wegner, tao2011universality, erdos2011universality,arous2013extreme, tao2013singlegap} and the references therein). An interesting question, originally posed by the computer science community in relation to the notorious graph isomorphism problem, was to obtain estimates on the smallest gap size, $\delta_{min}$, of a random matrix \cite{babai1982isomorphism}.  In fact, at the time, it was not known whether the gap was non-zero with high probability.  This fact was first established in \cite{tao2017simple}.  This result followed shortly by a more quantitative estimate of the gap size in \cite{nguyen2017tailbounds}.

In the present work, we examine $n \times n$ random matrices of the form $M_n = M + N_n$ where $M$ is a deterministic, symmetric matrix and $N_n$ is a symmetric random matrix with centered entries.  In  \cite{nguyen2017tailbounds}, the authors obtained some quantitative estimates on the gap sizes of $M_n$ when $M$ has operator norm that is at most polynomial in $n$, the size of the matrix.  

\begin{theorem}\cite[Theorem 2.6]{nguyen2017tailbounds}
	Let $N_n$ be populated with centered, subgaussian random variables of variance 1.  
	If $\|M\| \leq n^{c}$ for some constant $c > 0$, then for any fixed $C > 0$, there exists a $C'$ such that
	\[
	\P(\delta_{min} \leq n^{-C'}) \leq n^{-C}.
	\]
\end{theorem}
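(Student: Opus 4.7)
The plan is to reduce the eigenvalue-gap event to a small-ball event on a single random row via Cauchy interlacing and the Schur complement, and then control the latter via a Littlewood--Offord-type anti-concentration inequality. First, by sorting eigenvalues and a union bound,
\[
\P\bigl(\delta_{\min}(M_n) \leq \delta\bigr) \leq \sum_{i=1}^{n-1} \P\bigl(\lambda_{i+1}(M_n) - \lambda_i(M_n) \leq \delta\bigr),
\]
where $\delta := n^{-C'}$ is to be chosen. Fix $i$ and condition on the $(n-1)\times(n-1)$ principal minor $A = M_n^{(n)}$ obtained by deleting the last row and column; its eigenvalues $\mu_1 \leq \cdots \leq \mu_{n-1}$ and orthonormal eigenvectors $w_1,\dots,w_{n-1}$ are then deterministic. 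Cauchy interlacing forces $\mu_i \in [\lambda_i, \lambda_{i+1}]$, so on the target event both $|\mu_i - \lambda_i|$ and $|\mu_i - \lambda_{i+1}|$ are at most $\delta$.

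Next, expanding $\det(M_n - \lambda I)$ along the last row via the Schur complement gives, for $\lambda \notin \operatorname{Spec}(A)$,
\[
m_{nn} - \lambda \;=\; \sum_{p=1}^{n-1} \frac{\langle X, w_p\rangle^2}{\mu_p - \lambda},
\]
where $X$ denotes the random off-diagonal portion of the last column. Evaluating at $\lambda = \lambda_i$, isolating the $p = i$ term, and using $|\mu_p - \lambda_i| \geq g_i/2$ for $p \neq i$ (where $g_i := \min(\mu_i - \mu_{i-1}, \mu_{i+1} - \mu_i)$) yields, on the high-probability events $\|N_n\| \leq 3\sqrt{n}$ and $g_i \geq n^{-K}$,
\[
|\langle X, w_i\rangle|^2 \;\leq\; \delta \cdot n^{O(c+K)}.
\]
In other words, $X$ satisfies a slab condition against the fixed unit vector $w_i$ of width $\eta = n^{O(c+K)}\sqrt{\delta}$.

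Conditionally on $A$, the entries of $X$ are independent mean-zero subgaussian random variables, so the classical Erd\H os--Littlewood--Offord inequality bounds the slab probability by $O(\eta)$ at worst, with polynomial improvements available when $w_i$ is incompressible. Choosing $C'$ to be a sufficiently large multiple of $c + K + C$ makes this bound smaller than $n^{-C-1}$, which after summing over $i$ yields the desired $n^{-C}$ bound on $\P(\delta_{\min}(M_n) \leq n^{-C'})$.

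The main obstacle is controlling the two auxiliary events invoked above, namely that the minor gap $g_i$ is at least $n^{-K}$ and that the eigenvector $w_i$ is not too compressible. The first is a smaller-scale instance of the very problem one is trying to solve, and the natural strategy is induction on $n$ with a careful choice of the exponents $C'$ and $K$. The second is an eigenvector delocalization question for the minor $M^{(n)} + N_n^{(n)}$ of polynomially large operator norm, a regime outside the reach of standard Wigner-type delocalization theorems. The route is to translate compressibility of an eigenvector into arithmetic structure on a row of the matrix and then invoke inverse Littlewood--Offord theorems in the style of Rudelson--Vershynin (or the sharper variants of Jain alluded to in the abstract) to show that structured eigenvectors occur with only polynomially small probability. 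A net argument over compressible unit vectors then absorbs the residual probability, and the resulting exponents $C'$ can be made explicit in $c$ and $C$.
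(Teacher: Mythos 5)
Your high-level plan matches the strategy the paper sketches in Section 2 (and attributes to \cite{tao2017simple,nguyen2017tailbounds}): condition on the principal minor, invoke Cauchy interlacing to force the $i$-th minor eigenvalue $\mu_i$ into the small gap, reduce to a small-ball event for the inner product of the last column with a fixed minor eigenvector, and then control structured eigenvectors via inverse Littlewood--Offord theory. That much is sound and is essentially the route the cited proof takes.

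However, the specific reduction you use --- the Schur-complement/resolvent identity $m_{nn}-\lambda = \sum_p \langle X,w_p\rangle^2/(\mu_p-\lambda)$ evaluated at $\lambda=\lambda_i$ --- has a genuine gap. You assert $|\mu_p - \lambda_i| \geq g_i/2$ for all $p\neq i$, but interlacing only gives $\mu_{i-1}\leq \lambda_i \leq \mu_i$; it gives \emph{no} lower bound on $\lambda_i - \mu_{i-1}$, which can be arbitrarily small (in fact $\lambda_i$ can coincide with $\mu_{i-1}$ in degenerate configurations) independently of $g_i$. Thus the $p=i-1$ term in the sum is uncontrolled, and the terms $p < i-1$ require lower bounds on $\lambda_i-\mu_p \geq \mu_{i-1}-\mu_p$, i.e.\ on \emph{all} minor gaps below $i$, not just $g_i$. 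You flag the minor-gap condition as ``a smaller-scale instance of the very problem,'' but as stated the induction is circular: you need the conclusion (polynomial lower bounds on eigenvalue gaps, and also on the distances from $\lambda_i$ to nearby minor eigenvalues) as a hypothesis, and the latter distances are not themselves minor gaps.

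The paper's version of this step avoids the resolvent entirely. Writing the unit eigenvector of $M_n$ as $\mathbf{v}=(\mathbf{v}',v_n)$ and taking the inner product of the top block of the eigenvector equation with $\mathbf{w}=w_i$ gives
\begin{equation*}
|v_n\,\mathbf{w}^T\mathbf{x}| \;=\; |\mu_i - \lambda_i(M_n)|\cdot|\mathbf{w}^T\mathbf{v}'| \;\leq\; |\mu_i - \lambda_i(M_n)|,
\end{equation*}
using only $\|\mathbf{v}'\|\leq 1$; no control of the other $\mu_p$ or of minor gaps is needed. Together with interlacing and a union bound over the coordinate at which $|v_j|\geq n^{-1/2}$, this directly yields $|\mathbf{w}^T\mathbf{x}|\leq \eta\sqrt{n}$, exactly the slab condition you were after, with none of the auxiliary gap events. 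The rest of your proposal (Erd\H{o}s--Littlewood--Offord gives only $O(\max(\eta,n^{-1/2}))$ for a generic unit vector, so the inverse-theorem/compressibility machinery is indeed required, and this is where the real work lies) is consistent with the cited proof, but you should replace the Schur-complement reduction with the eigenvector-equation reduction to close the gap.
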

A simple consequence of this theorem is that $N_n$ has simple spectrum with probability at most $n^{-C}$ for any $C > 0$.  In the present work, we build on some recent combinatorial techniques in random matrix theory \cite{ferber2021counting, ferber2021resilience, ferber2019symmetric,  jain2021approximate, jain2021quantitative, jain2021strong,  luh2021finitefields}.  Using this method, we provide the first tail bounds for gap sizes of $M_n$ when $M$ can have operator norm exponential in the size of the matrix.  An immediate consequence of our result is an improved estimate on the probability of having simple spectrum that is exponentially small in $n$, rather than polynomial.  The rest of the article is devoted to the proof of our main theorem.

\begin{definition}
	A random variable $X$ is \emph{subgaussian} if there exists a constant $K > 0$ so that for all $t > 0$, 
	\[
	\P(|X| \geq t) \leq 2 \exp\left(-\frac{t^2}{K^2} \right).
	\]
\end{definition}

\begin{theorem} \label{thm:main}
Let $M_n = M + N_n$ where $\|M\| \leq \exp(n^{1/16})$.  We define $\lambda_i := \lambda_{i}(M_{n})$ where
\[
\lambda_1 \leq \lambda_2 \leq \cdots \leq \lambda_n.
\] We let $\delta_i = \lambda_{i+1} - \lambda_i$ and $\delta_{min} = \min_i \delta_i$.  
For $\alpha \geq  \exp(-n^{2/15})$ and
 $\nu =  (C_{\ref{thm:main}}\|M\| \alpha^{-1} n^{7/6})^{-4 \frac{\log(\alpha^{-1})}{\log n}}$,
\[
\P(\delta_{min} \leq \nu) \leq \alpha.
\]
\end{theorem}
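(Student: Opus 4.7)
My plan is to follow the classical ``remove one row/column'' scheme for symmetric random matrices used in the proof of Theorem 2.6, but to replace the inverse Littlewood--Offord input with the sharper combinatorial small-ball counting tools of Jain and coauthors so as to accommodate the exponentially large operator norm of $M$.

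\textbf{Reduction to a linear small-ball problem.} After a union bound over $i \in \{1,\dots,n-1\}$, it suffices to show $\Pr(\delta_i(M_n) \leq \nu) \leq \alpha/n$ for each fixed $i$. Decompose
\[
M_n = \begin{pmatrix} A & X \\ X^{\top} & \xi \end{pmatrix},
\]
where $A$ is the top-left $(n-1)\times(n-1)$ principal minor, $X \in \R^{n-1}$ is the random vector of off-diagonal entries of the last column, and $\xi$ is the random corner; condition on $A$, and let $\mu_1 \leq \cdots \leq \mu_{n-1}$, with orthonormal eigenvectors $u_1,\dots,u_{n-1}$, be its spectrum. Cauchy interlacing together with the identity
\[
w_n^{-2} = 1 + \sum_{j=1}^{n-1} \frac{\langle u_j, X\rangle^2}{(\mu_j - \lambda_{i+1})^2},
\]
obtained by expanding an eigenvector $(v',w_n)$ of $M_n$ associated with $\lambda_{i+1}$ in the basis $\{u_j\}$, forces $|\langle u_k, X\rangle| \leq \nu/|w_n|$ for some interlaced index $k$ whenever $\delta_i \leq \nu$. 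Modulo a separate delocalization estimate showing $|w_n| \gtrsim n^{-O(1)}$ with high probability---typically obtained by averaging over which row/column one excises, so that the relevant last coordinate is not anomalously small---the problem reduces to an anti-concentration bound for the linear form $\langle u_k, X\rangle$.

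\textbf{Applying Jain's counting bounds.} The vector $u_k$ is random (it is an eigenvector of $A = M' + N_n'$), so pointwise Littlewood--Offord estimates must be upgraded to uniform estimates over the sphere. Jain's approximate small-ball counting tools supply precisely this: a structured net over the unit sphere, parametrized by a regularity statistic $\rho$, on which $\Pr(|\langle u, X\rangle|\leq t) \lesssim t/\rho(u)$, but with a net entropy dramatically smaller than required by classical inverse Littlewood--Offord. This is what permits $\|M\|$ to be as large as $\exp(n^{1/16})$: the entropy cost of covering even those eigenvectors of $A$ that are strongly localized by inherited structure from $M'$ can be absorbed against the small-ball gain, yielding a $\log\nu^{-1}$ in the form $\log(\alpha^{-1}) \log(\|M\|n/\alpha)$ predicted by the theorem.

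\textbf{Main obstacle.} The principal technical difficulty is exactly the structure of the eigenvectors of $A$: because $M'$ can be arbitrarily structured with norm up to $\exp(n^{1/16})$, so can the $u_k$, and a naive LCD-based net over the sphere contributes an entropy term that overwhelms the anti-concentration gain unless $\|M\|$ is at most polynomial in $n$---the reason for the restriction in the Nguyen--Tao--Vu theorem. Jain's approximate counting allows exponentially large nets while still extracting a useful small-ball bound, but deploying it here requires a careful multi-scale construction that matches the anti-concentration scale $\nu/|w_n|$ to each piece of the net, together with establishing the delocalization control on $|w_n|$. Once those are in place, assembling all error terms and tracking their dependence on $\|M\|$, $\alpha$, and $n$ should yield the precise formula for $\nu$ claimed in the theorem.
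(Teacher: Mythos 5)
Your proposal follows the same strategy as the paper: the remove-one-row/column decomposition, Cauchy interlacing to turn an eigenvalue gap into a bound $|\langle u_k, X\rangle| \lesssim \nu/|w_n|$, a union bound over which coordinate to excise so that the excised coordinate is $\geq n^{-1/2}$, and then Jain's counting machinery (lattice approximation of segments of the eigenvector, regularized small-ball probability) to show that eigenvectors of the $(n-1)\times(n-1)$ minor are arithmetically unstructured even though $\|M\|$ may be exponential. One cosmetic difference: you derive the key inequality from the resolvent identity $w_n^{-2} = 1 + \sum_j \langle u_j, X\rangle^2 / (\mu_j - \lambda)^2$, whereas the paper gets the same thing more directly by projecting the eigenvector equation onto a single $u_i$; both are fine. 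A small imprecision in your framing: you describe the control of $|w_n|$ as a ``delocalization estimate\dots with high probability,'' but no probabilistic delocalization is needed — since the eigenvector is a unit vector, some coordinate is deterministically $\geq n^{-1/2}$, and one simply union-bounds over all $n$ choices of which row/column to remove, running the independent-subcolumn argument once per choice. The heart of the paper, which your sketch correctly identifies but leaves unfleshed, is the structure theorem for approximate null vectors: a multi-scale dyadic decomposition (in both a geometric scale $j$ and a small-ball level $\ell$), lattice rounding of $\beta n$-length segments, and then the counting bound of Jain applied to the rounded vectors, all of which must be balanced against the large-norm error $\|M\|\cdot\|\mathbf w - \mathbf v'\|$ incurred by the rounding.
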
  

Specializing the above theorem to a gap size of zero yields a bound on the probability of having simple spectrum.
\begin{corollary}
	Under the hypotheses of Theorem \ref{thm:main}, the probability that $M_n$ has simple spectrum is greater than $1 - \exp(-n^{2/15})$.  
\end{corollary}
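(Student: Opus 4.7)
The corollary is a direct specialization of Theorem~\ref{thm:main}, so the plan is short. First I would set $\alpha = \exp(-n^{2/15})$, which is exactly the boundary value permitted by the hypothesis $\alpha \geq \exp(-n^{2/15})$ of the theorem. For this choice of $\alpha$, the quantity
\[
\nu = (C_{\ref{thm:main}} \|M\| \alpha^{-1} n^{7/6})^{-4 \log(\alpha^{-1})/\log n}
\]
is manifestly a positive real number, since the base is positive and the exponent is a well-defined real (the assumption $\|M\| \leq \exp(n^{1/16})$ together with $\alpha^{-1} = \exp(n^{2/15})$ keeps the base finite and positive for all sufficiently large $n$, and the statement is for large $n$ in any case).

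Next I would observe that the event that $M_n$ fails to have simple spectrum is exactly the event $\{\delta_{\min} = 0\}$, which is contained in $\{\delta_{\min} \leq \nu\}$ because $\nu > 0$. Applying Theorem~\ref{thm:main} with the above choice of $\alpha$ therefore yields
\[
\P(M_n \text{ does not have simple spectrum}) \leq \P(\delta_{\min} \leq \nu) \leq \alpha = \exp(-n^{2/15}),
\]
and the corollary follows by taking complements.

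There is essentially no obstacle here: the only thing to verify is that the hypothesis on $\alpha$ in Theorem~\ref{thm:main} is satisfied and that $\nu$ is positive, both of which are immediate. In particular no new probabilistic input beyond Theorem~\ref{thm:main} is required, and the bound $\|M\| \leq \exp(n^{1/16})$ enters only through the hypotheses of the theorem, not through any additional estimate on $\nu$.
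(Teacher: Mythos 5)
Your proof is correct and is essentially the argument the paper intends when it says "Specializing the above theorem to a gap size of zero": choose $\alpha = \exp(-n^{2/15})$, note $\nu > 0$, observe $\{\delta_{\min}=0\}\subseteq\{\delta_{\min}\leq\nu\}$, and apply Theorem~\ref{thm:main}. No further commentary is needed.
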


\section{Proof Strategy} \label{sec:strategy}
We begin with the strategy first proposed in \cite{tao2017simple,nguyen2017tailbounds}.  We decompose $\mathbf{M}_n$ as 
\begin{equation}
	M_n = \left( \begin{array}{cc} M_{n-1} & \mathbf{x} \\
		\mathbf{x}^T & m_{nn} \end{array} \right)
\end{equation} 
where $M_{n-1}$ is the $n-1 \times n-1$ matrix in the upper left, $\mathbf{x}$ is an $n-1 \times 1$ vector and $m_{nn}$ is the remaining random variable in the lower right.  

By definition, for the $i$-th eigenvalue $\lambda_i(M_n)$ with unit eigenvector $\mathbf{v}$,
\[
\left( \begin{array}{cc} M_{n-1} & \mathbf{x} \\
	\mathbf{x}^T & m_{nn} \end{array} \right)  \left(\begin{array}{c} \mathbf{v}' \\ v_n \end{array} \right) = \lambda_i(M_n) \left(\begin{array}{c} 	\mathbf{v}' \\ v_n \end{array} \right) \text{ where we have written } \mathbf{v} = \left(\begin{array}{c}	\mathbf{v}' \\ v_n\end{array} \right).
\]
If we examine the top $n-1$ coordinates, we find that
\[
(M_{n-1} - \lambda_i(M_n)) \mathbf{v}' + v_n \mathbf{x} = 0.
\]
Now consider taking the innerproduct of the above equation with the eigenvector $\mathbf{w}$ corresponding to $\lambda_i(M_{n-1})$, the $i$-th eigenvalue of $M_{n-1}$. We obtain 
\begin{equation} \label{eq:gapsimplyinnerproduct}
|v_ n \mathbf{w}^T \mathbf{x}| = |(\lambda_i(M_{n-1}) - \lambda_i(M_n)| |\mathbf{w}^T \mathbf{v}'| \leq |(\lambda_i(M_{n-1}) - \lambda_i(M_n)|.
\end{equation}
By Cauchy's interlacing law, for any $\eta > 0$, the event $\lambda_{i+1}(M_n) - \lambda_i(M_n) \leq \eta$ implies the event $|(\lambda_i(M_{n-1}) - \lambda_i(M_n)| \leq \eta$.  This, in turn, by \eqref{eq:gapsimplyinnerproduct}, implies the event $|v_n \mathbf{w}^T \mathbf{x}| \leq \eta$.  We cannot guarantee that $v_n$ is large or even non-zero, but since $\mathbf{v}$ is a unit vector, there must be some coordinate that is of size greater than $n^{-1/2}$.  Taking a union bound over all $[n]$, we can assume that $|v_n| \geq n^{-1/2}$.  Thus, we need to bound the probability that $|\mathbf{w}^T \mathbf{x}| \leq \eta n^{1/2}$.  

Crucially, $\mathbf{w}$ and $\mathbf{x}$ are independent.  Therefore, the problem reduces to an issue of anti-concentration.  If we condition on $\mathbf{w}$, this falls under the domain of Littlewood-Offord theory \cite{littlewood1943roots}.  In a long series of works, it has been established that the anti-concentration of $\mathbf{w}^T \mathbf{x}$ is determined by the arithmetic structure of $\mathbf{w}$.  The bulk of the argument now is to show that eigenvectors of random matrices are disordered.  In random matrix theory, this is usually established by Inverse Littlewood-Offord theory \cite{tao2009inverselittlewood} or covering arguments via the Least Common Denominator \cite{rudelson2008invertibility}. Both of these breakthrough ideas have some drawbacks. The arguments that invoke Inverse Littlewood-Offord theory are only capable of providing inverse polynomial bounds on the probability of various events.  The covering arguments, on the other hand, are sensitive to the operator norm of the random matrix and tend to break down when the norm is too large. Although the Inverse Littlewood-Offord theory is more robust to the size of the operator norm, even this method can only handle norms polynomial in the size of the matrix.  A recent combinatorial innovation \cite{ferber2021counting} provides a method of bypassing the inverse theorems and extracting super-polynomial probability bounds.  This method has been applied successfully to many discrete random matrix questions \cite{ferber2019symmetric, luh2021finitefields, ferber2021resilience, jain2021approximate, jain2021strong}.  In \cite{jain2021quantitative}, Jain introduced a method of lattice approximations to extend least singular value bounds to matrices of superpolynomial norm perturbed by i.i.d. random matrices.  In our argumement, we adapt Jain's lattice approximation to the symmetric random matrix.  The key idea is to approximate subsets of a vector so that when we restrict our attention to those columns of the random matrix, there are many completely independent rows.  This is an insight that dates back to \cite{vershynin2014symmetric}.

\section{Auxiliary Results}
In this section, we consolidate some basic results that will be useful in the proof of Theorem \ref{thm:main}.
\subsection{Concentration of Random Variables}
We begin with a more precise definition of subgaussian.  
\begin{definition}
	A random variable is $K$-\emph{subgaussian} if for all $t > 0$, 
	\[
	\P(|X| \geq t) \leq 2 \exp\left(-\frac{t^2}{K^2} \right).
	\]
\end{definition}
The next lemma indicates that the linear combination of subgaussian random variables behaves well.  
\begin{lemma} \cite[Theorem 2.6.3]{vershynin2018highdimensional}
	Let $X_1, \dots, X_n$ be independent, mean-zero, $K$-subgaussian random variables and let $\mathbf{a} \in \mathbb{R}^n$.  Then, for all $t \geq 0$, 
	\[
	\P\left( \left| \sum_{i=1}^n a_i X_i\right| \geq t \right) \leq 2 \exp\left(- \frac{c t^2}{K^2 \|a\|_2^2} \right)
	\]
	for some absolute constant $c > 0$. 
\end{lemma}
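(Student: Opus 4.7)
The plan is to reduce the statement to the standard equivalence between subgaussian tail bounds and bounds on the moment generating function (MGF), then apply a Chernoff argument.

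First I would show that for a mean-zero $K$-subgaussian random variable $X$, there is an absolute constant $C>0$ such that $\E[\exp(\lambda X)] \leq \exp(C K^2 \lambda^2)$ for every $\lambda \in \R$. This is a routine but key step: one writes $\E[\exp(\lambda X)] = \sum_{k \geq 0} \lambda^k \E[X^k]/k!$, bounds the moments $\E[|X|^k] \leq (C' K)^k k^{k/2}$ using the tail bound $\P(|X|\geq t) \leq 2\exp(-t^2/K^2)$ together with the identity $\E[|X|^k] = \int_0^\infty k t^{k-1}\P(|X|\geq t)\, dt$, and then Stirling's formula bounds $k^{k/2}/k!$ by $(e/k)^{k/2}\cdot k^{k/2}$ schematically, yielding convergence and the desired Gaussian-type MGF bound after using $\E X = 0$ to kill the $\lambda^1$ term (which otherwise would not vanish).

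Next I would exploit independence. Setting $S = \sum_{i=1}^n a_i X_i$, for any $\lambda \in \R$,
\[
\E[\exp(\lambda S)] = \prod_{i=1}^n \E[\exp(\lambda a_i X_i)] \leq \prod_{i=1}^n \exp(C K^2 \lambda^2 a_i^2) = \exp\bigl(C K^2 \lambda^2 \|\mathbf{a}\|_2^2\bigr).
\]
So $S$ itself has a Gaussian-type MGF, with effective variance proxy of order $K^2 \|\mathbf{a}\|_2^2$.

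Finally, the tail bound follows by a standard Chernoff/Markov step. For $t \geq 0$ and any $\lambda > 0$,
\[
\P(S \geq t) \leq e^{-\lambda t} \E[\exp(\lambda S)] \leq \exp\bigl(-\lambda t + C K^2 \lambda^2 \|\mathbf{a}\|_2^2\bigr).
\]
Optimizing in $\lambda$ by choosing $\lambda = t/(2 C K^2 \|\mathbf{a}\|_2^2)$ gives $\P(S \geq t) \leq \exp(-c t^2/(K^2 \|\mathbf{a}\|_2^2))$ with $c = 1/(4C)$. Applying the same argument to $-S$ (which is also a sum of independent mean-zero $K$-subgaussian variables with the same coefficients $-a_i$) and summing the two one-sided bounds yields the claimed two-sided inequality with an extra factor of $2$ in front of the exponential.

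The main obstacle, if any, is purely bookkeeping: the implication from the tail bound in the definition of $K$-subgaussian to a clean MGF bound with a universal constant $C$. Everything after that is a short independence calculation plus optimization, and the absolute constant $c$ in the conclusion can simply be taken as $1/(4C)$.
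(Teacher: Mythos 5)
Your proposal is correct and follows essentially the same route as the cited reference (Vershynin, Theorem~2.6.3), which the paper invokes without reproducing a proof: pass from the tail bound to a Gaussian-type MGF bound via moment estimates, multiply MGFs using independence, and finish with a Chernoff argument plus optimization over $\lambda$. The only place to be slightly careful is in the first step, where the series expansion of $\E[\exp(\lambda X)]$ requires Tonelli/dominated convergence to interchange sum and expectation, and where one must note that the odd-order terms for $k\geq 3$ do not vanish but are dominated by neighboring even terms (only the $k=1$ term vanishes by $\E X=0$); your sketch acknowledges this bookkeeping and it does go through with a universal constant.
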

The following result is a well-known bound on the operator norm of a random matrix.  
\begin{lemma}\cite[Theorem 4.4.5]{vershynin2018highdimensional} \label{lem:opnorm}
	Let $N_n$ be a symmetric $n \times n$ matrix whose entries on and above the diagonal are independent, mean-zero, $K$-subgaussian random variables.  Then, 
	\[
	\P(\|N_n\| \geq C_{\ref{lem:opnorm}} \sqrt{n}) \leq 2 \exp(-n)
	\]
	where $C_{\ref{lem:opnorm}}$ depends only on $K$.  
\end{lemma}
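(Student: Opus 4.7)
The plan is the textbook $\varepsilon$-net argument for operator norms of random matrices; the only point requiring care is to expand the relevant quadratic form in a way that preserves independence despite the symmetry constraint.

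First I would use the identity $\|N_n\| = \sup_{x \in S^{n-1}} |x^T N_n x|$, which is valid for symmetric matrices. Fix a $\tfrac{1}{4}$-net $\mathcal{N} \subset S^{n-1}$; by a standard volumetric bound one may take $|\mathcal{N}| \leq 12^n$, and the routine net-to-sphere approximation yields
\[
\|N_n\| \;\leq\; 2 \max_{x \in \mathcal{N}} |x^T N_n x|.
\]
So it suffices to bound the maximum of $|x^T N_n x|$ over the finite set $\mathcal{N}$.

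Second, for a fixed $x \in \mathcal{N}$, I would expand
\[
x^T N_n x \;=\; \sum_{i=1}^{n} N_{ii}\, x_i^2 \;+\; 2 \sum_{1 \leq i < j \leq n} N_{ij}\, x_i x_j,
\]
which realizes $x^T N_n x$ as a linear combination of the independent $K$-subgaussian entries of $N_n$ on and above the diagonal. The $\ell^2$ norm of the coefficient vector is
\[
\Big( \sum_i x_i^4 \,+\, 4 \sum_{i<j} x_i^2 x_j^2 \Big)^{1/2} \;\leq\; 2\|x\|_2^2 \;=\; 2,
\]
so the linear-combination subgaussian tail bound quoted just before Lemma~\ref{lem:opnorm} gives $\P(|x^T N_n x| \geq t) \leq 2 \exp(-c t^2 /(4 K^2))$ for an absolute $c > 0$.

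Third, I would set $t = C \sqrt{n}$ and union-bound over the net, obtaining
\[
\P\!\left( \max_{x \in \mathcal{N}} |x^T N_n x| \geq C \sqrt{n} \right) \;\leq\; 2 \cdot 12^n \exp\!\left( -\frac{c C^2 n}{4 K^2} \right).
\]
Choosing $C$ large enough (in a manner depending only on $K$) that $c C^2/(4K^2) \geq \log 12 + 1$ makes the right-hand side at most $2 e^{-n}$. Absorbing the factor of $2$ from the net approximation into the constant yields $C_{\ref{lem:opnorm}} = 2C$ and the stated bound. No step is a genuine obstacle — the result is classical, and the only real subtlety is the explicit expansion of $x^T N_n x$, which exhibits the independent subgaussian structure hidden by the symmetry of $N_n$.
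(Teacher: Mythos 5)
Your proof is correct: the paper does not reprove this lemma but cites it from Vershynin's textbook, whose own proof of Theorem 4.4.5 is exactly the $\varepsilon$-net argument you give. Your only looseness is the covering number ($9^n$ would suffice for a $\tfrac14$-net rather than $12^n$) and the coefficient-norm bound ($\sqrt{2}$ rather than $2$), neither of which affects the conclusion.
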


\subsection{Anti-concentration of Random Variables}
To capture the notion of anti-concentration, we introduce the following definition.
\begin{definition}
	For a vector $\mathbf{x} \in \R^n$ and random variable $\xi$, we define the \emph{L\'evy concentration function} of $\mathbf{x}$ to be 
	\[
	\rho_{\xi}(\mathbf{x}, \eps) = \sup_{\mathbf{y} \in \R} \P(|\langle \mathbf{x}, \boldsymbol{\xi} \rangle - \mathbf{y}| \leq \eps)
	\]
	where $\boldsymbol{\xi} \in \R^n$ is a vector of independent copies of $\xi$.  
\end{definition}

	As we will only be dealing with the random variables from Theorem \ref{thm:main}, we will often omit the dependence on $\xi$ from the notation from now on so that $\rho(\Bx) := \rho_{\xi}(\Bx)$.

\begin{lemma}\cite[Corollary 6.3]{tao2010smoothanalysis} \label{lem:constant}
	Let $\boldsymbol{\xi} = (\xi_1, \dots, \xi_n)$ be a random vector with independent, identical copies of a random variable, $\xi$, which has variance 1.  Then there exist constant $c \in (0,1)$ such that
	\[
	\sup_{\mathbf{v} \in \mathcal{S}^{n-1}} \rho_{\xi}( \mathbf{v}, c_{\ref{lem:constant}}) \leq 1 - c_{\ref{lem:constant}}.
	\]
\end{lemma}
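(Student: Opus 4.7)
The plan is to combine a marginal anti-concentration estimate for $\xi$ with an Esseen-type characteristic-function bound, splitting cases based on whether the unit vector $\mathbf{v}$ has a heavy coordinate. First I observe that since $\Var\xi = 1$, any concentration of $\xi$ into an interval of half-length $\delta < 1$ would force $\Var\xi \leq \delta^2 < 1$; hence $c_1 := 1 - \rho_\xi(1/2) > 0$. Fix a threshold $c_* > 0$ to be chosen at the end and set $M := \max_i |v_i|$. In the \emph{localized} case $M \geq c_*$, pick $i^*$ realizing the max; conditioning on the (independent) remaining coordinates and using $\rho(a\xi, c) = \rho_\xi(c/|a|)$,
\[
\rho(\mathbf{v}, c) \;\leq\; \rho(v_{i^*}\xi_{i^*}, c) \;=\; \rho_\xi(c/|v_{i^*}|) \;\leq\; \rho_\xi(c/c_*) \;\leq\; 1 - c_1
\]
provided $c \leq c_*/2$.

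For the \emph{spread} case $M < c_*$ the plan is to bound the characteristic function $\phi_Z(t) = \prod_i \phi_\xi(tv_i)$ of $Z := \langle \mathbf{v}, \boldsymbol{\xi}\rangle$. Writing $\eta := \xi - \xi'$ for an independent copy, so that $|\phi_\xi(s)|^2 = 1 - \E[1-\cos(s\eta)]$, I would apply $1 - \cos(x) \geq x^2/4$ on $\{|s\eta| \leq 1\}$ and invoke dominated convergence through $\E[\eta^2 \oindicator{|\eta| \leq R}] \to \E\eta^2 = 2$ as $R \to \infty$ to produce $s_0 = s_0(\xi) > 0$ with $|\phi_\xi(s)|^2 \leq 1 - s^2/4$, and hence $|\phi_\xi(s)| \leq \exp(-s^2/8)$, for $|s| \leq s_0$. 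When $M < c_*$, every $|t| \leq s_0/c_*$ satisfies $|tv_i| \leq s_0$ for all $i$, so
\[
|\phi_Z(t)| \;\leq\; \prod_i \exp\big(-(tv_i)^2/8\big) \;=\; \exp(-t^2/8).
\]
Plugging this into Esseen's concentration inequality $\rho(Z, c) \leq C c \int_{-1/c}^{1/c} |\phi_Z(t)|\,dt$ at the scale $c = c_*/s_0$ then yields $\rho(Z, c) \leq C\sqrt{8\pi}\,c$.

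The two bounds are combined by fixing $c_* = c_*(\xi)$ small enough that $C\sqrt{8\pi}\cdot c_*/s_0 \leq 1/2$, and then taking $c := \min\{c_*/s_0,\, c_*/2,\, c_1\}$; both cases then give $\rho(\mathbf{v}, c) \leq 1 - c$, with $c$ depending on the law of $\xi$ only through $s_0$ and $c_1$ and not on $n$ or $\mathbf{v}$. The main obstacle I anticipate is justifying the characteristic-function bound of the second paragraph from the second moment alone: the usual Taylor expansion requires three finite moments to control the remainder, whereas the hypothesis gives us only $\E\xi^2 = 1$. The workaround is precisely the truncation-plus-DCT step sketched above, which breaks $\E[1-\cos(s\eta)]$ into the region $|s\eta| \leq 1$ (where $1-\cos(s\eta) \geq (s\eta)^2/4$ rigorously) and a harmless nonnegative remainder, yielding a uniform quadratic lower bound for $1 - |\phi_\xi(s)|^2$ near the origin without appealing to any higher moment.
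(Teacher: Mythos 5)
The paper does not prove this lemma; it is imported as a black box from Tao--Vu (cited as \cite[Corollary 6.3]{tao2010smoothanalysis}), so there is no internal proof to compare against. Your argument is a correct, self-contained derivation in the classical Esseen/characteristic-function style, and the case split on whether $\mathbf{v}$ has a heavy coordinate is exactly the right device: the heavy-coordinate case reduces to the marginal anti-concentration of $\xi$ via the conditioning/restriction step (which is the single-coordinate instance of Lemma~\ref{lem:restriction}), while the spread case uses the product structure of $\phi_Z$ together with $\sum_i v_i^2 = 1$ to get a Gaussian-type decay of the characteristic function on a fixed window, after which Esseen closes it out. The truncation-plus-convergence workaround for the lack of higher moments is also sound: $\E\eta^2 = 2\Var\xi = 2 < \infty$, so $\E[\eta^2\oindicator{|\eta|\le R}]\uparrow 2$ and one can pick $R_0$ with $\E[\eta^2\oindicator{|\eta|\le R_0}]\ge 1$, giving $s_0 = 1/R_0 > 0$ depending only on the law of $\xi$.

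One small point you should make explicit: the claim $c_1 := 1 - \rho_\xi(1/2) > 0$ requires showing that the \emph{supremum} over centers is strictly less than $1$, not merely that no single center $y$ satisfies $\P(|\xi - y|\le 1/2)=1$. The standard fix is a short tightness argument: if $y_k$ were a sequence with $\P(|\xi - y_k|\le 1/2)\to 1$, then for large $j,k$ the intervals $[y_j \pm 1/2]$ and $[y_k \pm 1/2]$ must overlap, so $(y_k)$ is bounded; passing to a convergent subsequence $y_k\to y^*$ and using continuity from above gives $\P(|\xi - y^*|\le 1/2)=1$, hence $\Var\xi\le 1/4$, a contradiction. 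With that patched, the proof is complete.
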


The next lemma is from \cite{jain2021quantitative} and establishes that the L\'evy concentration of an inner product of a random vector with a deterministic vector is stable under perturbations of the deterministic vector.  
\begin{lemma} \cite[Proposition 3.4]{jain2021quantitative}\label{lem:stability}
	Let $\mathbf{y}, \mathbf{z} \in \R^n$ and $r, s \geq 0$.  Then, for $\boldsymbol{\xi} = (\xi_1, \dots, \xi_n)$, a random vector with independent, mean-zero, $K$-subgaussian random variables, 
	\[
	\rho( \mathbf{y}, r+s) \geq \rho( \mathbf{z} , r) - e \cdot \exp\left( - \frac{c_{\ref{lem:stability}} s^2}{\|\mathbf{y} - \mathbf{z}\|_2^2} \right)
	\]
\end{lemma}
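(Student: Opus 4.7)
The plan is to prove the stability lemma by decomposing
\[
\langle \mathbf{y}, \boldsymbol{\xi}\rangle = \langle \mathbf{z}, \boldsymbol{\xi}\rangle + \langle \mathbf{y}-\mathbf{z}, \boldsymbol{\xi}\rangle,
\]
so that on the ``good'' event where $|\langle \mathbf{y}-\mathbf{z}, \boldsymbol{\xi}\rangle| \leq s$, the event $|\langle \mathbf{z}, \boldsymbol{\xi}\rangle - \mathbf{w}| \leq r$ is contained in $|\langle \mathbf{y}, \boldsymbol{\xi}\rangle - \mathbf{w}| \leq r+s$. The contribution of the ``bad'' event that the perturbation exceeds $s$ is then controlled by the subgaussian tail bound for sums (i.e., the subgaussian concentration lemma already cited in the excerpt, applied to the vector of coefficients $\mathbf{y}-\mathbf{z}$).

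In detail, first I would fix an arbitrary $\mathbf{w} \in \R$ and write
\[
\P\bigl(|\langle \mathbf{z}, \boldsymbol{\xi}\rangle - \mathbf{w}| \leq r\bigr) \leq \P\bigl(|\langle \mathbf{y}, \boldsymbol{\xi}\rangle - \mathbf{w}| \leq r+s\bigr) + \P\bigl(|\langle \mathbf{y}-\mathbf{z}, \boldsymbol{\xi}\rangle| > s\bigr),
\]
which is just the union bound after observing that on the complement of the second event the triangle inequality puts $\langle \mathbf{y}, \boldsymbol{\xi}\rangle$ within $r+s$ of $\mathbf{w}$. Applying the subgaussian linear-combination bound (the lemma quoted as \cite[Theorem 2.6.3]{vershynin2018highdimensional} in the excerpt) with coefficient vector $\mathbf{y}-\mathbf{z}$, one gets
\[
\P\bigl(|\langle \mathbf{y}-\mathbf{z}, \boldsymbol{\xi}\rangle| > s\bigr) \leq 2\exp\!\left(-\frac{c\, s^2}{K^2\,\|\mathbf{y}-\mathbf{z}\|_2^2}\right),
\]
where $c > 0$ is the absolute constant from that lemma. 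Rearranging and taking the supremum over $\mathbf{w}$ on the left-hand side yields
\[
\rho(\mathbf{y}, r+s) \geq \rho(\mathbf{z}, r) - 2\exp\!\left(-\frac{c\, s^2}{K^2\,\|\mathbf{y}-\mathbf{z}\|_2^2}\right),
\]
and setting $c_{\ref{lem:stability}} := c/K^2$ and absorbing the prefactor $2 \leq e$ gives the claimed inequality.

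There is essentially no real obstacle here: the proof is a one-line union bound followed by an application of the standard subgaussian tail. The only mild point is keeping track of the constants so that the final dependence on $K$ is packaged into $c_{\ref{lem:stability}}$, which is harmless since $K$ is fixed throughout the paper. Thus the lemma reduces transparently to the two previously cited concentration facts, and no conditioning or auxiliary construction is required.
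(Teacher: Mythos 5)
The paper states this lemma as a citation to \cite[Proposition 3.4]{jain2021quantitative} and gives no proof of its own, so there is nothing in-paper to compare against. Your argument is correct: the union bound
\[
\P\bigl(|\langle \mathbf{z}, \boldsymbol{\xi}\rangle - w| \leq r\bigr) \leq \P\bigl(|\langle \mathbf{y}, \boldsymbol{\xi}\rangle - w| \leq r+s\bigr) + \P\bigl(|\langle \mathbf{y}-\mathbf{z}, \boldsymbol{\xi}\rangle| > s\bigr)
\]
holds for each fixed $w$ by the triangle inequality, the second term is controlled by the subgaussian Hoeffding bound with coefficient vector $\mathbf{y}-\mathbf{z}$, and taking the supremum over $w$ and absorbing the prefactor $2 \le e$ gives the stated inequality with $c_{\ref{lem:stability}} = c/K^2$. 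This is precisely the standard stability argument for the L\'evy concentration function, and it matches what one would expect the cited proof to be; no gaps.
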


The next lemma indicates the relationship between the anti-concentration of a vector and a subvector.
\begin{lemma} \cite[Lemma 2.1]{rudelson2008invertibility} \label{lem:restriction}
	Let $\Bv \in \R^n$ and $\Bv' \in \R^m$ where $m \leq n$ and $\Bv'$ is a subvector of $\Bv$, meaning its entries are a subset of those of $\Bv$.
	Then, for $r > 0$, 
	\[
	\rho(\Bv, r) \geq \rho(\Bv', r).
	\]  
\end{lemma}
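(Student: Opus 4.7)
The plan is to reduce the result to a standard conditioning argument exploiting the independence of the coordinates of $\boldsymbol{\xi}$. After a harmless relabeling of indices, I may assume that $\Bv'$ consists of the first $m$ entries of $\Bv$, so that $\Bv = (\Bv', \Bv'')$ with $\Bv'' \in \R^{n-m}$. The random vector $\boldsymbol{\xi} = (\xi_1, \dots, \xi_n)$ of i.i.d.\ copies of $\xi$ splits correspondingly as $\boldsymbol{\xi} = (\boldsymbol{\xi}', \boldsymbol{\xi}'')$ with $\boldsymbol{\xi}'$ and $\boldsymbol{\xi}''$ independent, and
$$\langle \Bv, \boldsymbol{\xi} \rangle = \langle \Bv', \boldsymbol{\xi}' \rangle + \langle \Bv'', \boldsymbol{\xi}'' \rangle.$$

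Next I would fix an arbitrary $y_0 \in \R$ and condition on $\boldsymbol{\xi}''$. Since $\boldsymbol{\xi}'$ and $\boldsymbol{\xi}''$ are independent, the conditional distribution of $\langle \Bv', \boldsymbol{\xi}' \rangle$ given $\boldsymbol{\xi}''$ is unchanged, so for every realization $z$ of $\langle \Bv'', \boldsymbol{\xi}'' \rangle$,
$$\P\bigl(|\langle \Bv', \boldsymbol{\xi}' \rangle - y_0| \leq r\bigr) = \P\bigl(|\langle \Bv, \boldsymbol{\xi} \rangle - (y_0 + z)| \leq r \,\big|\, \langle \Bv'', \boldsymbol{\xi}'' \rangle = z\bigr) \leq \rho(\Bv, r),$$
the inequality being the defining property of the supremum in $\rho(\Bv, r)$. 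Taking expectation over $\boldsymbol{\xi}''$ preserves the bound (the left side does not depend on $\boldsymbol{\xi}''$), and then taking the supremum over $y_0$ yields $\rho(\Bv', r) \leq \rho(\Bv, r)$.

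There is no real obstacle in this argument; the statement is essentially a tautological consequence of the fact that the L\'evy concentration function is defined as a supremum over all shifts, so that appending independent coordinates can only spread mass out and cannot enhance concentration. The only minor point requiring attention is that the two sides of the inequality involve anti-concentration functions of vectors of different lengths, but this is handled automatically by choosing the appropriate shift $y_0 + z$ on the right-hand side once $\boldsymbol{\xi}''$ has been conditioned on.
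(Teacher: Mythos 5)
Your proposal contains a genuine error, but it is partly masked by the fact that the inequality as printed in the lemma is itself a typo: the direction is reversed. The correct assertion — and the one actually used later in the paper, e.g.\ in the proof of Lemma~\ref{lem:nearinteger}, where it is invoked as ``$\rho(\Bv,\eta)\le\rho(\Bv_i,\eta)$ by Lemma~\ref{lem:restriction}'' — is
\[
\rho(\Bv, r) \;\le\; \rho(\Bv', r),
\]
i.e.\ the \emph{longer} vector has the \emph{smaller} concentration. Your own heuristic (``appending independent coordinates can only spread mass out and cannot enhance concentration'') says exactly this, and it flatly contradicts the inequality $\rho(\Bv',r)\le\rho(\Bv,r)$ you set out to prove; that tension should have been a red flag. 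A concrete counterexample to the direction as printed: take $\xi$ Rademacher, $\Bv=(1,1,1)$, $\Bv'=(1)$, $r=0$; then $\rho(\Bv',0)=1/2$ but $\rho(\Bv,0)=3/8$.

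The specific step that fails is the last inequality in your display: you assert that for every realization $z$,
\[
\P\bigl(|\langle \Bv,\boldsymbol{\xi}\rangle-(y_0+z)|\le r\ \big|\ \langle\Bv'',\boldsymbol{\xi}''\rangle=z\bigr)\ \le\ \rho(\Bv,r).
\]
This is not ``the defining property of the supremum'': $\rho(\Bv,r)$ is a supremum of \emph{unconditional} probabilities $\P(|\langle\Bv,\boldsymbol{\xi}\rangle-y|\le r)$, and a conditional probability given $\boldsymbol{\xi}''$ is not dominated by that supremum — conditioning can concentrate the distribution arbitrarily more. (Indeed, the conditional probability on the left equals $\P(|\langle\Bv',\boldsymbol{\xi}'\rangle-y_0|\le r)$, which can be as large as $\rho(\Bv',r)$, strictly larger than $\rho(\Bv,r)$ as the example shows.) The correct argument uses your same decomposition but conditions the other way: for any fixed $y\in\R$,
\[
\P\bigl(|\langle\Bv,\boldsymbol{\xi}\rangle-y|\le r\bigr)
=\E_{\boldsymbol{\xi}''}\Bigl[\,\P_{\boldsymbol{\xi}'}\bigl(|\langle\Bv',\boldsymbol{\xi}'\rangle-(y-\langle\Bv'',\boldsymbol{\xi}''\rangle)|\le r\bigr)\Bigr]
\le\E_{\boldsymbol{\xi}''}\bigl[\rho(\Bv',r)\bigr]=\rho(\Bv',r),
\]
because for each fixed $\boldsymbol{\xi}''$ the inner probability is an unconditional small-ball probability of $\langle\Bv',\boldsymbol{\xi}'\rangle$ at some deterministic shift, hence bounded by $\rho(\Bv',r)$. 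Taking the supremum over $y$ gives $\rho(\Bv,r)\le\rho(\Bv',r)$.
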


The last proposition of this section guarantees that a vector that does not have a scaling near an integer lattice point must have small L\'evy concentration function.
\begin{proposition} \cite[Proposition 2.8]{jain2021quantitative}\label{prop:distance}
	Let $\xi_1, \dots, \xi_n$ be independent copies of a $K$-subgaussian random variable $\xi$ and let $\mathbf{y} \in \R^n \setminus \{\mathbf{0}\}$.  Suppose that there exists $\alpha(n) \in (0,1)$ and $\beta(n) \in (1, \infty)$ such that
	\[
	\dist(\gamma \mathbf{y}, \mathbb{Z}^n) \geq \tau 
	\]
	for all $\gamma \in \R$ such that $|\gamma| \in [\alpha(n), \beta(n)]$.  Then, for any $r \geq 0$, 
	\[
	\rho_{\xi}(\By, r) \leq C_{\ref{prop:distance}} \exp(\pi r^2) \Big( \exp(-c_{\ref{prop:distance}} \beta(n)^2) + \exp(-c_{\ref{prop:distance}} \tau^2) + \alpha(n) \Big)
	\] 
	where $C_{\ref{prop:distance}}, c_{\ref*{prop:distance}}$ depend only on $K$.  
\end{proposition}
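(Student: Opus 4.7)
The plan is a Fourier-analytic argument combining a Gaussian-smoothed Esseen inequality with a three-region splitting of the characteristic-function integral, arranged so that each of the three summands $e^{-c\beta^2}$, $e^{-c\tau^2}$, $\alpha(n)$ emerges from a distinct regime of the integration variable.

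I would first use the pointwise bound $\mathbf{1}_{|z-y_0|\le r} \le e^{\pi r^2 - \pi(z-y_0)^2}$ and then Fourier-invert the Gaussian on the right to obtain, for any $y_0 \in \R$,
\[
\P\bigl(|\langle \By,\boldsymbol\xi\rangle - y_0| \le r\bigr) \le e^{\pi r^2}\int_{\R} e^{-\pi t^2}\,\prod_{j=1}^{n}\bigl|\phi_\xi(2\pi t\,y_j)\bigr|\,dt,
\]
where $\phi_\xi(s)=\E e^{is\xi}$. Taking the supremum over $y_0$ on the left yields the same bound for $\rho_\xi(\By,r)$; this step alone already produces the $e^{\pi r^2}$ prefactor appearing in the conclusion.

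Next I need a pointwise small-ball estimate on $|\phi_\xi|$ that ``sees'' the integer lattice. Symmetrizing, $|\phi_\xi(u)|^2 = \E\cos(u(\xi-\xi'))$, and the elementary identity $1-\cos(2\pi z)\ge c\|z\|_{\R/\Z}^2$ combined with a subgaussian moment bound ultimately yields, via the lattice approximation technique of Jain \cite{jain2021quantitative}, an estimate of the form $|\phi_\xi(2\pi u)|\le \exp(-c_K\|u\|_{\R/\Z}^2)$ up to losses that will be absorbed by the other terms. Tensorizing,
\[
\prod_{j=1}^{n}|\phi_\xi(2\pi t\,y_j)| \le \exp\!\bigl(-c_K\,\dist(t\By,\Z^n)^2\bigr).
\]
I then split $\R$ into the three ranges $|t|<\alpha(n)$, $\alpha(n)\le|t|\le\beta(n)$, and $|t|>\beta(n)$. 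On the small range the integrand is at most $1$, contributing $O(\alpha(n))$. On the middle range the hypothesis $\dist(\gamma\By,\Z^n)\ge\tau$ forces the integrand below $e^{-c\tau^2}$, contributing $O(e^{-c\tau^2})$. On the tail the Gaussian weight is at most $e^{-\pi\beta(n)^2}$, contributing $O(e^{-c\beta(n)^2})$. Summing the three pieces and reinserting the $e^{\pi r^2}$ prefactor gives exactly the claimed bound.

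The main obstacle is the pointwise estimate on $|\phi_\xi|$: for a generic $K$-subgaussian $\xi$ the symmetrization $\xi-\xi'$ is \emph{not} supported on the integers, so $1-\cos(2\pi u(\xi-\xi'))$ is not directly controlled by $\|u\|_{\R/\Z}^2$. Overcoming this is exactly Jain's lattice approximation: one replaces $\xi$ by a suitably discretized version on the scale dictated by $\beta(n)$ and $\tau$, and controls the discretization error using the subgaussian tail. This approximation step is what forces $\tau^2$ (rather than a cleaner exponent) in the final bound and ties the parameters $\alpha(n),\beta(n),\tau$ together in the hypothesis.
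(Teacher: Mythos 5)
Your Fourier-analytic skeleton --- a Gaussian-smoothed Esseen inequality followed by a three-region split of the $t$-integral --- is the right shape, and your first displayed inequality correctly produces the $e^{\pi r^2}$ prefactor. The middle step, however, is where the argument breaks. The pointwise estimate $|\phi_\xi(2\pi u)| \le \exp(-c_K\|u\|_{\R/\Z}^2)$ is false for $K$-subgaussian $\xi$: take $\xi$ equal to $0$ or $2$ with probability $1/2$ each and $u=1/2$; then $\|u\|_{\R/\Z}=1/2$, yet $\phi_\xi(2\pi u)=\phi_\xi(\pi)=(1+e^{2\pi i})/2=1$, so no such $c_K$ exists. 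What symmetrization actually produces, after writing $\tilde\xi=\xi-\xi'$ and using $1-\cos(2\pi z)\ge 8\|z\|_{\R/\Z}^2$, is
\[
\prod_{j=1}^{n}\bigl|\phi_\xi(2\pi t\,y_j)\bigr|^2 \;\le\; \exp\!\Bigl(-c\,\E_{\tilde\xi}\bigl[\dist(t\tilde\xi\,\By,\Z^n)^2\bigr]\Bigr),
\]
with the distance evaluated at the \emph{random} dilation $\gamma=t\tilde\xi$, not at $\gamma=t$. This is precisely why the hypothesis is stated over an interval of $\gamma$ rather than a single value: on the middle range of $t$ one lower-bounds $\E\,\dist(t\tilde\xi\,\By,\Z^n)^2 \ge \tau^2\,\P(|\tilde\xi|\in[c_1,c_2])$, and to make that a constant one needs a quantitative lower bound on the spread of $\xi$ (e.g.\ unit variance), which is invisible in your sketch. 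Without routing through the random dilation, the interval hypothesis on $\gamma$ plays no role and the middle region cannot produce the $e^{-c\tau^2}$ factor.

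Your closing paragraph also misidentifies the mechanism. The ``lattice approximation'' of \cite{jain2021quantitative} is a counting/net device applied to the \emph{deterministic} vector (it is the passage from $\Bv$ to the integer vector $\Bv'$ that appears in the body of the present paper), not a discretization of the random variable $\xi$. No discretization of $\xi$ is needed or used in an estimate of this kind; the correct ingredient is symmetrization plus averaging over $\tilde\xi$ as above. Correspondingly, the $\tau^2$ exponent is not an artifact of any approximation step --- it comes directly from the quadratic lower bound on $1-\cos$ applied to $\dist(t\tilde\xi\,\By,\Z^n)$.
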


\subsection{Counting Lemma}
Our last auxiliary result is the key combinatorial tool that allows us to improve on the Inverse Littlewood-Offord theorems that are used in \cite{nguyen2017tailbounds}.
\begin{theorem} \cite[Theorem 2.8]{jain2021quantitative} \label{thm:counting}
	For $\rho \in (0,1)$, $\xi$ a random variable with variance 1, we define
	\[
	\mathbf{S}_{\rho} := \left \{ \mathbf{a} \in \mathbb{Z}^n: \rho_{\xi}(\mathbf{a}, 1) \geq \rho\right\}.
	\]
	There exists a constant $C_{\ref{thm:counting}}$ such that for $l, m \in \mathbb{N}$ with $1000 K \leq l \leq \sqrt{m} \leq m \leq n/\log n$, the following holds.  If $\rho \geq C_{\ref{thm:counting}} \max\{e^{-m/l}, m^{-l/4}\}$ and $p$ is an odd prime with $C_{\ref{thm:counting}} \rho^{-1} \leq p \leq 2^{n/m}$, then
	\[
	|\phi_p(\mathbf{S}_p)| \leq \left( \frac{5 np^2}{m} \right)^m + \left(\frac{C_{\ref{thm:counting}} \rho^{-1}}{\sqrt{m/l}} \right)^n
	\]  
\end{theorem}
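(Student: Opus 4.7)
The plan is to adopt the swapping/counting framework of Ferber--Jain--Luh--Zhao in the mod-$p$ formulation developed by Jain. The objective is to pass from the real-valued concentration hypothesis $\rho_\xi(\Ba,1) \geq \rho$ for $\Ba \in \mathbf{S}_\rho$ to a counting estimate on $\phi_p(\mathbf{S}_\rho) \subset (\Z/p\Z)^n$ by testing $\Ba$ against many independent symmetrized random vectors and then performing a dichotomy based on how ``spread'' the reduced vector is.

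First I would symmetrize: for any $\Ba \in \mathbf{S}_\rho$, the hypothesis combined with a standard reflection argument yields
\[
\P\bigl(|\langle \Ba, \boldsymbol{\eta}\rangle| \leq 2\bigr) \geq \rho^2,
\]
where $\boldsymbol{\eta} = \boldsymbol{\xi} - \boldsymbol{\xi}'$ has i.i.d.\ symmetric components with variance $2$. Since $p \geq C_{\ref{thm:counting}}\rho^{-1}$, the short interval $\{-2,\dots,2\}$ is separated from the rest of $\Z/p\Z$, and because $\eta$ has unit-order variance with $p$ not too large, its mod-$p$ reduction is approximately equidistributed on $\Z/p\Z$ by a local Berry--Esseen estimate. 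Taking $l$ independent copies $\boldsymbol{\eta}^{(1)},\dots,\boldsymbol{\eta}^{(l)}$ of the symmetrized vector and fixing any representative $\bar{\Ba}$ of a class in $\phi_p(\mathbf{S}_\rho)$, one obtains a lower bound of the form
\[
\P\Bigl(\langle \bar{\Ba}, \bar{\boldsymbol{\eta}}^{(i)}\rangle \equiv 0 \pmod{p} \text{ for all } i \leq l\Bigr) \geq (c\rho^2/p)^l.
\]

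Next I would sum this lower bound over $\bar{\Ba} \in \phi_p(\mathbf{S}_\rho)$ and compare against a uniform upper bound on the number of $\bar{\Ba}$ annihilating a fixed generic $l$-tuple of test vectors modulo $p$. To extract the two-term dichotomy, I would partition $[n]$ into roughly $m/l$ blocks of size $\sim l$ and split into two cases for each candidate $\bar{\Ba}$. In the \emph{concentrated} case, $\bar{\Ba}$ is essentially supported on fewer than $m$ coordinates modulo $p$, giving at most $\binom{n}{m} p^{2m}$ such vectors, which after $\binom{n}{m} \leq (en/m)^m$ and absorbing constants is the first term $(5np^2/m)^m$ (the factor $p^2$ arising from the two-sided window coming from symmetrization). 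In the \emph{spread} case, the contributions of the blocks tensorize in a local central limit calculation, and the per-coordinate anti-concentration of order $\rho^{-1}/\sqrt{m/l}$ (reflecting the $(m/l)^{-1/2}$ Berry--Esseen rate on a block of length $m/l$) raised to the $n$-th power yields the second term.

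The main obstacle I anticipate is calibrating the parameter regimes so that the block-level local CLT and the mod-$p$ equidistribution hold simultaneously with negligible loss. This is exactly what forces the hypotheses $1000K \leq l \leq \sqrt{m}$, $p \leq 2^{n/m}$, and $\rho \geq C_{\ref{thm:counting}}\max\{e^{-m/l}, m^{-l/4}\}$: the exponent $e^{-m/l}$ controls the tail loss in the Berry--Esseen estimate on each block of length $m/l$, while $m^{-l/4}$ reflects the polynomial loss in the Halász-type swapping step. Balancing these two regimes through the concentrated/spread dichotomy, and then optimizing the block size, produces the stated bound.
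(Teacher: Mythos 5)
The paper does not prove this statement: it is imported verbatim as \cite[Theorem 2.8]{jain2021quantitative} with no proof given (and $\phi_p$, the mod-$p$ reduction map, is not even defined in the text). So there is no internal proof to compare against; I can only assess your sketch against the argument in Jain's paper.

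Your sketch correctly identifies the high-level template behind this class of results: symmetrize, pass to a mod-$p$ count, and split into a ``concentrated'' case (giving the $(5np^2/m)^m$ term via $\binom{n}{m}$ choices of support and a per-coordinate factor) and a ``spread'' case (giving the $(C\rho^{-1}/\sqrt{m/l})^n$ term via a Hal\'asz-type estimate). But as written the sketch has gaps that would prevent the argument from closing. First, $\eta = \xi - \xi'$ is \emph{not} integer-valued --- $\xi$ is an arbitrary $K$-subgaussian variable of variance $1$ --- so the object $\langle \bar{\Ba}, \bar{\boldsymbol\eta}^{(i)}\rangle \pmod{p}$ is undefined. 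Jain's argument requires a genuine discretization step (rounding $\xi$ to a lattice and, crucially, showing that rounding does not destroy the hypothesis $\rho_\xi(\Ba,1)\geq\rho$, which is delicate because the rounding error in $\langle\Ba,\cdot\rangle$ scales with $\|\Ba\|_1$, which can be as large as $np$). Relatedly, the claim that $\eta$'s mod-$p$ reduction is ``approximately equidistributed on $\Z/p\Z$'' cannot be right: $p$ is allowed to be as large as $2^{n/m}$ while $\eta$ has unit-order variance, so a single $\eta$ is heavily concentrated near $0$ mod $p$.

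Second, and more seriously, the comparison scheme you outline is too weak. Summing the per-vector lower bound $(c\rho^2/p)^l$ over $\phi_p(\BS_\rho)$ and comparing with a generic count $p^{n-l}$ of $\bar{\Ba}$ annihilating $l$ random linear forms over $\Z/p\Z$ gives $|\phi_p(\BS_\rho)| \leq p^{n}/(c\rho^2)^l$. Since $l \leq \sqrt{m} \leq \sqrt{n/\log n}$, this is essentially $p^n$ up to subexponential factors and is far weaker than the stated bound. The actual engine of Jain's proof is not a generic union bound against the test matrix but a Hal\'asz-type inequality (in the discrete/mod-$p$ setting) that forces any $\Ba$ with $\rho_\xi(\Ba,1)\geq\rho$ to have a rigid combinatorial structure --- the count of vectors with that structure is what produces the second term. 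A smaller but telling issue: the proposed block decomposition into ``roughly $m/l$ blocks of size $\sim l$'' only covers $m \leq n/\log n$ coordinates, not $[n]$; the real decomposition has to cover all of $[n]$ for the $n$-th power in the second term to appear. So the two-term shape you arrive at is the right target, but the mechanism you describe would not produce it.
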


\section{Structure of Null-vectors}
The goal of this section is to show that vectors near the kernel of $M_n$ are unstructured.  The proof of this result is inspired by the method in Section 3 of \cite{jain2021quantitative}, which establishes a similar result in the non-symmetric case.  
For the symmetric setting, it is more convenient to work with a regularized version of the small-ball probability.  We define,
\[
\rho_{\beta}(\Bv, r) = \inf_{I \subset \{1, \dots, n\}: |I| = 2 \lfloor \beta n\rfloor} \rho(\Bv_{I}, r)
\] 
where $\Bv_{I}$ denotes the vector in $\R^{|I|}$ formed from $\Bv$ by keeping only the entries indexed by $I$.    
This parameter captures the anti-concentration of small segments of $\Bv$ and emulates the regularized LCD from \cite{vershynin2014symmetric}. 
We choose $\beta = o(1)$ and will drop the floor functions on $\lfloor \beta n \rfloor$ as they are not crucial to the calculations.
In this notation, we suppress the dependence of $\rho$ on the random variables $\xi_1, \dots, \xi_n$.  
We call a vector \emph{poor} if $\rho_{\beta}(\Bv, \eta) \leq \alpha$ and \emph{rich} otherwise.  We denote the poor and rich vectors by $\BP_\eta(\alpha)$ and $\BR_{\eta}(\alpha)$ respectively.  When $\eta$ is clear from context, we will use the abbreviations $\BP(\alpha)$ and $\BR(\alpha)$.  We show that with high probability, all approximate null-vectors are poor.

\begin{proposition} \label{prop:rich}
	Let $\|M\| \leq \exp(n^{1/15})$.  
	For $\alpha  \geq  \exp(-n^{2/15})$ and any $\eta \leq (C_{\ref{prop:rich}}\|M\| \alpha^{-1} n^{7/6})^{-4 \frac{\log(\alpha^{-1})}{\log n}} $ we have that
	\[
	\P(\exists \Bv \in \BR(\alpha): \|M_n \Bv\|_2 \leq \eta ) \leq C_{\ref{prop:rich}} \exp(-c_{\ref{prop:rich}} n)
	\]
	where $C_{\ref{prop:rich}} >1$ and $c_{\ref{prop:rich}}$ depend only on $\xi$.  	
\end{proposition}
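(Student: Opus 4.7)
The plan is to adapt Jain's lattice approximation strategy from \cite{jain2021quantitative} to the symmetric setting, using the regularized anti-concentration parameter $\rho_\beta$ together with the symmetric-independence trick of \cite{vershynin2014symmetric}. First I would condition on the high-probability event $\{\|N_n\| \leq C_{\ref{lem:opnorm}}\sqrt n\}$ from Lemma \ref{lem:opnorm}; this has probability at least $1 - 2e^{-n}$ and is easily absorbed into the final bound. On this event, any approximate null vector $\Bv$ satisfies $\|M \Bv\|_2 \leq \eta + C_{\ref{lem:opnorm}}\sqrt n$, and the problem reduces to controlling $\{\|M_n \Bv\|_2 \leq \eta\}$ for $\Bv$ in a discrete net rather than on the whole sphere (a continuous net is too expensive when $\|M\|$ is exponential).

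The core argument proceeds in three stages: construct a discrete net, count it, and apply a small-ball bound for each net element. For the net, fix any $\Bv \in \BR(\alpha)$. For every subset $I \subset [n]$ of size $2\lfloor\beta n\rfloor$ we have $\rho(\Bv_I, \eta) \geq \alpha$, so the contrapositive of Proposition \ref{prop:distance} furnishes, for each such $I$, some scale $\gamma_I \in [\alpha(n), \beta(n)]$ (with endpoints tuned so that the conclusion of that proposition would otherwise force $\rho < \alpha$) such that $\dist(\gamma_I \Bv_I, \mathbb{Z}^{|I|}) \leq \tau$. A pigeonhole over a discretization of the allowed scales then extracts a universal scale $\gamma$ and a global integer vector $\Bw$ such that $\gamma \Bv - \Bw$ is small in $\ell^\infty$ on a subset of coordinates of proportion $1 - O(\beta)$. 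The richness of $\Bv$ transfers to $\Bw$ (after restricting to any $2\beta n$-subset) via Lemma \ref{lem:stability} and Lemma \ref{lem:restriction}, with negligible loss in the anti-concentration parameter.

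To count the net, reduce candidate integer vectors modulo a well-chosen odd prime $p$ in the range $[C_{\ref{thm:counting}} \alpha^{-1}, 2^{n/m}]$ and apply Theorem \ref{thm:counting} with $\rho \approx \alpha$ and parameters $l, m$ satisfying $1000K \leq l \leq \sqrt m \leq m \leq n/\log n$. This bounds the net size by $(5np^2/m)^m + (C_{\ref{thm:counting}} \alpha^{-1}/\sqrt{m/l})^n$. For the small-ball bound on each net element $\Bw$, fix a subset $I$ of size $2\beta n$ realizing the infimum in $\rho_\beta(\Bv,\eta)$; the entries of $N_n$ in rows $[n] \setminus I$ and columns $I$ are \emph{genuinely independent}, because the symmetric constraint only pairs entries within $I\times I$ or within $([n]\setminus I)\times([n]\setminus I)$, never across. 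Hence the event $\|M_n \Bv\|_2 \leq \eta$ forces each of the $(1-2\beta)n$ corresponding rows to lie in a deterministic $\eta$-ball once the $M$-contribution is removed, which by the richness (transferred to $\Bw$) has probability at most $\alpha$ per row, giving a joint probability at most $\alpha^{(1-2\beta)n}$.

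Combining the counting bound with the factor $\alpha^{(1-2\beta)n}$, then summing over the $\binom{n}{2\beta n}$ choices of $I$ and calibrating $\beta = o(1)$ together with $l, m, p$ and $\tau$, yields the target bound $C_{\ref{prop:rich}}\exp(-c_{\ref{prop:rich}} n)$ for $\eta$ in the stated range. The main obstacle will be the joint calibration of $(\alpha, \eta, \beta, l, m, p, \tau)$: the hypothesis $\rho \geq C_{\ref{thm:counting}}\max\{e^{-m/l}, m^{-l/4}\}$ in Theorem \ref{thm:counting} together with the polynomial factor $n^{7/6}$ in the statement forces the lower bound $\alpha \geq \exp(-n^{2/15})$, while the requirement that the lattice-approximation error (governed by Lemma \ref{lem:stability} with $\|\By-\Bz\|_2$ controlled by $\tau$ and $\|M\|$) remain dominated by $\alpha^{(1-2\beta)n}$ is what drives the double-logarithmic exponent $-4\log(\alpha^{-1})/\log n$ in the formula for $\eta$. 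Matching all of these constraints simultaneously, including the awkward presence of $\|M\|$ inside the net-approximation error, is the delicate bookkeeping of the proof.
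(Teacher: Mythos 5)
Your high-level plan correctly identifies the paper's ingredients (Jain's lattice approximation, the regularized $\rho_\beta$, Theorem \ref{thm:counting}, and the row-column independence trick from \cite{vershynin2014symmetric}), but the argument as sketched has a fatal flaw in the small-ball step. You claim that ``richness (transferred to $\Bw$) has probability at most $\alpha$ per row, giving a joint probability at most $\alpha^{(1-2\beta)n}$.'' This is backwards: richness means $\rho_\beta(\Bv,\eta) \geq \alpha$, so the per-row anti-concentration is \emph{at least} $\alpha$, not at most. An upper bound of $\alpha$ per row is exactly what you would get from \emph{poorness}, which is the opposite class of vectors. With only the richness hypothesis, your per-row bound is vacuous, and the claimed $\alpha^{(1-2\beta)n}$ does not follow.

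What is actually missing, and what supplies the correct per-row bound in the paper, is the Tao--Vu pigeonhole of Lemma \ref{lem:j} (finding a scale level $j \leq J$ where $\rho_\beta(\Bv, \eta T^{j+1}) \leq n^{b/4}\rho_\beta(\Bv, \eta T^j)$) together with the dyadic partition $\BR_{j,\ell}$ by $\rho_\beta(\Bv, \eta T^j) \in (2^{-\ell-1},2^{-\ell}]$. With these in hand, the small-ball probability of the lattice approximation $\Bv'$ at the relevant scale is bounded in Lemma \ref{lem:smallballR} by $\min\{1-c_{\ref{lem:constant}},\, 2n^{b/4}2^{-\ell}\}$, and the proof closes because the net cardinality from Theorem \ref{thm:counting} scales like $(2^\ell/n^{b/2})^{n}$: multiplying gives $(16C_{\ref{thm:counting}} n^{-b/4})^n$, where the $2^\ell$ factors cancel. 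Your proposal has no analogue of $\ell$ and no mechanism for this cancellation. A second, subsidiary issue is your use of a single ``universal scale $\gamma$'' for the lattice approximation: the paper instead uses a block-diagonal scaling $D = \mathrm{diag}(D_1\I,\dots,D_m\I)$ with a separate $D_i \in [c_{\ref{lem:nearinteger}}\alpha, n^b]$ per block of $\sim\beta n$ coordinates; different blocks of $\Bv$ can require genuinely different scales, and pigeonholing to a single $\gamma$ over all blocks simultaneously is not justified.
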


We prove Proposition \ref{prop:rich} in a series of steps.  
For notational convenience, we define
\[
b = \frac{1}{15} \text{ and } \beta = n^{-3 b}.
\]
Additionally, to streamline the argument, we assume that $\|M\| \geq C_{\ref{lem:opnorm}} \sqrt{n}$.  We sketch the necessary modifications when this condition is violated at the end of the section.
The first lemma is an observation of Tao and Vu \cite{tao2008circular}.
\begin{lemma} \label{lem:j}
	For any $\Bv \in \BR(\alpha)$, there exists $j \in \{0, 1, \dots, J(\alpha, n, T)\}$ ($T > 1$) such that 
	\[
	\rho_{\beta }(\Bv, \eta T^{j+1}) \leq n^{b/4} \rho_{\beta }(\Bv, \eta T^{j})
	\]
	where $J = 5 b^{-1} \frac{\log(\alpha^{-1})}{\log n}$ and $T = c_{\ref{lem:constant}}^{-1}\|M\| \alpha^{-1} n n^b \beta^{-1/2}$.
\end{lemma}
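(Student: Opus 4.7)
The plan is to prove Lemma \ref{lem:j} by a one-line telescoping pigeonhole argument on the geometric sequence of scales $\eta T^{0}, \eta T^{1}, \dots, \eta T^{J+1}$. Observe that the specific expression for $T$ and the constant $c_{\ref{lem:constant}}$ are not actually exploited in this lemma itself; all that matters is $T > 1$, which guarantees the sequence of scales is strictly increasing so that $\rho_\beta(\Bv, \eta T^{j+1}) \geq \rho_\beta(\Bv, \eta T^j)$ by the obvious monotonicity in the radius. The particular value of $T$ is fixed now so that it can be reused in the subsequent lemmas leading to Proposition \ref{prop:rich}.

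I argue by contradiction. Suppose the conclusion fails at every $j$, i.e.\
\[
\rho_\beta(\Bv, \eta T^{j+1}) > n^{b/4}\, \rho_\beta(\Bv, \eta T^j) \qquad \text{for every } j = 0, 1, \dots, J.
\]
Taking the product of these $J+1$ inequalities telescopes to
\[
\rho_\beta(\Bv, \eta T^{J+1}) > n^{(J+1)b/4}\, \rho_\beta(\Bv, \eta) > n^{(J+1)b/4}\, \alpha,
\]
where the last step uses $\Bv \in \BR(\alpha)$, i.e.\ $\rho_\beta(\Bv, \eta) > \alpha$ by definition. Substituting $J = 5 b^{-1} \log(\alpha^{-1})/\log n$ gives $n^{J b /4} = \alpha^{-5/4}$, so the right-hand side is at least $n^{b/4} \alpha^{-1/4}$. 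Since rich vectors can only exist when $\alpha < 1$ (otherwise $\BR(\alpha) = \emptyset$ because $\rho_\beta \leq 1$), this quantity is strictly greater than $1$, contradicting the trivial upper bound $\rho_\beta(\Bv, \eta T^{J+1}) \leq 1$. Hence some $j \in \{0, 1, \dots, J\}$ must satisfy the desired inequality.

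There is no genuine obstacle in the proof of this lemma in isolation: it is a probabilistic pigeonhole applied to a quantity squeezed between $\alpha$ (from below, by richness) and $1$ (from above, because it is a probability). The real work, and the actual reason $T$ is loaded with the factors $\|M\|$, $\alpha^{-1}$, $n\, n^{b}$ and $\beta^{-1/2}$, happens in the lemmas that follow en route to Proposition \ref{prop:rich}: there the scale $\eta T^{j+1}$ returned by this pigeonhole must simultaneously absorb the net penalty on the sphere of radius $\|M\|$ (via Lemma \ref{lem:opnorm}), the perturbation stability from Lemma \ref{lem:stability}, and the counting estimate of Theorem \ref{thm:counting}. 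The choice $J \asymp \log(\alpha^{-1})/\log n$ is exactly the length that makes $n^{Jb/4}\alpha > 1$ while keeping $T^{J+1}$ only quasi-polynomially large, which is crucial for the later union-bound arguments.
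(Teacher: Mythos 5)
Your proof is correct and is essentially the same as the paper's: both argue by contradiction, telescoping the failure of the inequality across $j=0,\dots,J$ and then using richness ($\rho_\beta(\Bv,\eta)>\alpha$) together with the bound $\rho_\beta\leq 1$ to derive a contradiction once $n^{Jb/4}\alpha>1$. Your version is slightly more careful with the off-by-one indexing (tracking $\rho_\beta(\Bv,\eta T^{J+1})$ rather than $\rho_\beta(\Bv,\eta T^{J})$), and the surrounding commentary on the role of $T$ is accurate but not needed for the lemma itself.
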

\begin{proof}
	Note that $\rho_{\beta}(\Bv,  \eta T^j)$ is an increasing function in $j$ and by definition all $\rho$ are bounded by 1.  Assume to the contrary that no such $j$ existed, then 
	\[
	\rho_{\beta} (\Bv, \eta T^{J}) \geq n^{Jb/4} \alpha  > 1,
	\]  
	a contradiction.
\end{proof}

Paying a price of $\binom{n}{2 \beta n} \ll \exp(n)$, we can assume that $\rho_{\beta}(\Bv, \eta T^j)$ is achieved by the first $2\beta n$ coordinates of $\Bv$.  We can also divide $\Bv$ into subvectors, $\Bv_1, \dots, \Bv_m$ of consecutive indices of size between $\beta n$ and $2 \beta n$, where $\Bv_1 \in \R^{2 \beta n}$ and $\rho(\Bv_1, \eta T^j) = \rho_{\beta}(\Bv, \eta T^j)$.  We allow for a range of sizes for divisibility issues.  Note that we use the same subdivision for all vectors with the same $2 \beta n$ coordinates that achieve the $\rho_\beta$ bound so that we still only need to take the $\binom{n}{2 \beta n} \ll \exp(n)$ union bound. We use $\dim(\Bv_i)$ to denote the size of the subvector.  For every $\Bv \in \BR(\alpha)$, we fix such a $j$ from Lemma \ref{lem:j} arbitrarily and we use $\BR_j(\alpha)$ to be those vectors in $\BR(\alpha)$ indexed by $j$.  This partitions 
\[
\BR(\alpha) = \sqcup_{j=0}^J \BR_j(\alpha).
\]
We divide this partition further by introducing
\[
\BR_{j,\ell} = \{\Bv \in \BR_j(\alpha): \rho_{\beta }(\Bv,\eta T^j) \in (2^{-\ell-1}, 2^{-\ell}]\}.
\]

We focus on a fixed $j$ and $\ell$ and take a union bound over $|J| \times \log(n/\alpha)$ at the end.

\begin{lemma} \label{lem:nearinteger}
	For $\Bv \in \BR_{j,\ell}$, there exist $D_1, \dots, D_{\beta^{-1}} \in [c_{\ref{lem:nearinteger}}\alpha, n^{b}]$ and some $\Bw_{i}' \in \Z^{\dim(\Bv_i)}$ such that 
	\[
	\|\Bw_i - \Bv_i'\| \leq n^{b}.
	\]
	where $\Bw_i = \eta^{-1} T^{-j} D_i \Bv_i$.
\end{lemma}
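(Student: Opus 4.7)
The plan is to apply the contrapositive of Proposition \ref{prop:distance} to each subvector $\Bv_i$ separately, using the richness of $\Bv$ to supply the lower bound on the concentration function that the proposition's conclusion would violate.

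First I would establish that $\rho(\Bv_i, \eta T^j) > 2^{-\ell-1} > \alpha/2$ for every $i \in \{1, \dots, \beta^{-1}\}$. For $i = 1$ this is immediate from the construction $\rho(\Bv_1, \eta T^j) = \rho_\beta(\Bv, \eta T^j) \in (2^{-\ell-1}, 2^{-\ell}]$ together with $\Bv \in \BR_{j,\ell} \subset \BR(\alpha)$. For $i \geq 2$, the block $\Bv_i$ has size at most $2\beta n$, so I would embed it in some size-$2\beta n$ coordinate set $I$ and combine Lemma \ref{lem:restriction} (applied in its natural subvector-monotonicity direction) with the infimum definition of $\rho_\beta$ to obtain $\rho(\Bv_i, \eta T^j) \geq \rho_\beta(\Bv, \eta T^j) > 2^{-\ell-1}$.

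Next, I would fix $i$ and argue by contradiction: suppose no $D_i \in [c_{\ref{lem:nearinteger}}\alpha, n^b]$ satisfies $\dist(D_i \eta^{-1} T^{-j} \Bv_i, \Z^{\dim(\Bv_i)}) \leq n^b$. Setting $\By := \eta^{-1} T^{-j} \Bv_i$, this exactly matches the hypothesis of Proposition \ref{prop:distance} with $\alpha(n) = c_{\ref{lem:nearinteger}}\alpha$, $\beta(n) = n^b$, and $\tau = n^b$. Taking $r = 1$ then produces
\[
\rho(\Bv_i, \eta T^j) \;=\; \rho_\xi(\By, 1) \;\leq\; C_{\ref{prop:distance}} e^{\pi} \bigl( 2 \exp(-c_{\ref{prop:distance}} n^{2b}) + c_{\ref{lem:nearinteger}} \alpha \bigr).
\]
Since $\alpha \geq \exp(-n^{2/15}) = \exp(-n^{2b})$, the exponential error terms can be absorbed into $O(\alpha)$, so the right side reduces to $C' c_{\ref{lem:nearinteger}} \alpha$ for a constant $C'$ depending only on $\xi$. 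Choosing $c_{\ref{lem:nearinteger}}$ small enough that $C' c_{\ref{lem:nearinteger}} < 1/4$ forces $\rho(\Bv_i, \eta T^j) < \alpha/4$, which contradicts the lower bound from the previous step. Hence a valid $D_i$ exists, and the nearest integer vector $\Bw_i' \in \Z^{\dim(\Bv_i)}$ within distance $n^b$ of $\Bw_i = \eta^{-1} T^{-j} D_i \Bv_i$ yields the desired approximation.

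The main technical obstacle I anticipate is the quantitative balance of the Gaussian tails against $\alpha$: because $c_{\ref{prop:distance}}$ is a constant which may be smaller than $1$, ensuring $\exp(-c_{\ref{prop:distance}} n^{2b}) \leq c_{\ref{lem:nearinteger}} \alpha$ at the boundary $\alpha \approx \exp(-n^{2b})$ may require either mildly enlarging the distance tolerance beyond $n^b$ (which is harmless since the nearest integer is still comfortably inside the range) or shrinking $c_{\ref{lem:nearinteger}}$ further, absorbing the constants into the $\xi$-dependent parameters. Note that the non-growth estimate from Lemma \ref{lem:j} is not needed at this stage; it will enter only later when the near-integer data produced here is assembled into a net argument.
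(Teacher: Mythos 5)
Your proposal is correct and follows essentially the same route as the paper: you apply the contrapositive of Proposition~\ref{prop:distance} to each block $\Bv_i$ and derive a contradiction with the lower bound on $\rho(\Bv_i, \eta T^j)$ coming from the richness of $\Bv$ (the paper phrases the contradiction through $\rho_\beta(\Bv,\eta) > \alpha$ rather than through $2^{-\ell-1} > \alpha/2$, but it is the same mechanism). Your closing observation that $\exp(-c_{\ref{prop:distance}} n^{2b})$ need not be dominated by $c_{\ref{lem:nearinteger}}\alpha$ at the boundary $\alpha \approx \exp(-n^{2b})$ when $c_{\ref{prop:distance}} < 1$ is a genuine calibration point that the paper glosses over, and your proposed fixes (enlarge the tolerance $\tau$ or tighten the lower bound on $\alpha$) are the right ones.
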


\begin{proof}
	Let $f(n) = c_{\ref{lem:nearinteger}}\alpha$, $g(n) = n^{b}$ and $\Bx = \eta^{-1} T^{-j} \Bv_i$. If the desired conclusion does not hold, then for all $t \in [f(n), g(n)]$ and $i$, $\dist(t \Bx_i, \Z^{\beta n}) \geq n^{b}$.  By Proposition \ref{prop:distance}, 
	\[
	\rho(\Bx_i, 1) \leq C_{\ref{prop:distance}} \exp(\pi)  \left(2\exp\left(-c_{\ref{prop:distance}} n^{2b}\right)  + c_{\ref{lem:nearinteger}}\alpha\right) < \alpha
	\]
	for small enough constant $c_{\ref{lem:nearinteger}} > 0$ since $\alpha \geq  2^{-n^{2b}}$.
	
	This implies that
	\[
	\rho(\Bv_i, \eta) \leq \rho(\Bv_i, T^j \eta) = 	\rho_{1, \xi}(\Bx_i) < \alpha
	\]
	which is a contradiction to our assumption that $\Bv \in \BR_{j,\ell}$ since $\rho(\Bv, \eta) \leq \rho(\Bv_i, \eta)$ by Lemma \ref{lem:restriction}.  
\end{proof}

The previous lemma allows us to reduce our claim to a statement about the integer lattice.  Let $\EE$ be the event that $\|N_n\| \leq C_{\ref{lem:opnorm}} \sqrt{n}$.  
Therefore, by Lemma \ref{lem:opnorm},
\[
\P(\exists \Bv \in \BR_{j, \ell}(\alpha): \|M_{n} \Bv\| \leq \eta) \leq \P(\exists \Bv \in \BR_{j, \ell}(\alpha): \|M_{n} \Bv\| \leq \eta \cap \EE ) + 2\exp(-n).
\]
Suppose the event on the right occurs, in other words, there is a $\Bv$ such that $\|M_{n} \Bv\| \leq \eta$. By Lemma \ref{lem:nearinteger}, there exist $D_i \in [c_{\ref{lem:nearinteger}}\alpha/n, n^{b}]$ and $\Bv' \in \Z^n$ such that 
\[
\| \Bw_i - \Bv_i'\| \leq n^{b}.
\]   
For $i \in [m]$, let $\mathbf{D}_i = D_i \I_{\dim(\Bv_i)}$. Then, let $D$ denote the diagonal matrix 
\[
D = \left( \begin{array}{cccc} \mathbf{D}_1  & &  & \\
	& \mathbf{D}_2 & & \\
 & & \ddots & \\
& & & \mathbf{D}_m\end{array} \right).
\]  
Thus,
\begin{align*}
	\|M_n \Bv'\|_2 &\leq \|M_n \eta^{-1} T^{-j} D \Bv\|_2 + \|M_n\| \|\Bw - \Bv'\|_2 \\
	&\leq \eta^{-1} T^{-j} \|D\|  \|M_n\Bv\|_2 +  (\|M\| +C_{\ref{lem:opnorm}} \sqrt{n}) \beta^{-1/2} n^{b}  \\
	&\leq T^{-j}\|D\| + 2\|M\| \beta^{-1/2}  n^{b} \\
	&\leq 3 \|M\|  \beta^{-1/2} n^{b}
\end{align*} 
where we have utilized the observations that $T > \|M\|$ and $\|M\|\geq C_{\ref{lem:opnorm}} \sqrt{n}$.

Now let $X_i$ denote the row of $M_n$.  For the event above to hold, there must be at least $n - n^{2b}$ coordinates for which \[
|X_i \cdot \Bv'| \leq 3 \|M\| \beta^{-1/2}.
\] 
Therefore, by Lemma \ref{lem:restriction}, 
\[
\P(\|M_n \Bv'\| \leq 3 \|M\| n^{b} \beta^{-1/2}) \leq \rho_{\beta}(\Bv', 3 \|M\| \beta^{-1/2})^{n - n^{2b}- \beta n}.
\]
This is the point where it is crucial that we use a regularized version of $\rho$, as this allows us to use the independence of $n- n^{2b} - \beta n$ rows to achieve the exponent on the right hand side.
Let us define 
\begin{equation} \label{eq:net}
\widetilde{\BR_{j,\ell}}(\alpha) = \{ \Bv' \in \Z^n: \exists \Bv \in \BR_{j, \ell}(\alpha), D_i \in \R \text{ s.t. } |D_i| \in [c_{\ref{lem:nearinteger}}\alpha/n, n^{b}], \|\eta^{-1} T^{-j}  D \Bv_i - \Bv_i'\| \leq n^{b}\}
\end{equation}

We need to control the entropy of $\widetilde{\BR_{j,\ell}}(\alpha)$.
\begin{proposition} With $\widetilde{\BR_{j,\ell}}(\alpha)$ defined in \eqref{eq:net}, we have that
	\[
	|\widetilde{\BR_{j,\ell}}(\alpha)| \leq 2^{2 n^{1-b} \beta^{-1}}+ \left( \frac{8 C_{\ref{thm:counting}}  2^{\ell} }{n^{b/2}} \right)^{ n}.
	\]
\end{proposition}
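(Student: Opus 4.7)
The plan is to combine the lattice approximation from Lemma \ref{lem:nearinteger} with the counting bound of Theorem \ref{thm:counting}, in the spirit of \cite{jain2021quantitative} adapted to the symmetric regularization $\rho_\beta$.

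I would first transfer the anti-concentration of each block $\Bv_i$ of a rich vector $\Bv \in \BR_{j,\ell}$ to its integer approximation $\Bv_i'$. By Lemma \ref{lem:restriction} together with the definition of $\rho_\beta$, each $\Bv_i$, viewed as a subvector of some size-$2\beta n$ subvector realizing the infimum, satisfies $\rho(\Bv_i, \eta T^j) \geq 2^{-\ell-1}$. Since $\|\Bv_i' - \eta^{-1}T^{-j}D_i\Bv_i\|_2 \leq n^b$ with $|D_i| \leq n^b$, Lemma \ref{lem:stability} combined with the scaling identity $\rho(c\Bv, r) = \rho(\Bv, r/|c|)$ yields $\rho(\Bv_i', n^b + s) \geq 2^{-\ell-2}$ upon choosing $s$ of order $n^b\sqrt{\ell+1}$ so that the stability error is dominated. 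Thus each block $\Bv_i'$ inherits anti-concentration at scale $n^b$.

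Next I would dyadically decompose $\widetilde{\BR_{j,\ell}}(\alpha)$ according to the magnitudes of the scalars $D_i$. In the sparse regime, where many $|D_i|$ lie near the bottom $c_{\ref{lem:nearinteger}}\alpha/n$ of the admissible interval, the corresponding $\Bv_i'$ are forced to have sup-norm essentially bounded by $n^b$, and a direct count over the blocks, together with a union bound over the dyadic scales of the $D_i$, contributes the first term $2^{2n^{1-b}\beta^{-1}}$. In the complementary non-sparse regime, I would apply Theorem \ref{thm:counting} block-by-block with ambient dimension $\beta n$, choosing $\sqrt{m/l} \sim n^{b/2}/(8C_{\ref{thm:counting}})$ and an odd prime $p$ in the allowed range $[C_{\ref{thm:counting}}\rho^{-1}, 2^{\beta n/m}]$. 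Because $\|M\| \leq \exp(n^{1/16})$, the upper end $2^{\beta n/m}$ can be made larger than the a priori bound on the entries of $\Bv_i'$, so $\phi_p$ is injective on the relevant image and the mod-$p$ count lifts faithfully. Multiplying the per-block count $(8C_{\ref{thm:counting}}2^\ell/n^{b/2})^{\beta n}$ over the $\beta^{-1}$ blocks yields the second term.

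The main obstacle is the scale mismatch between Theorem \ref{thm:counting}, which controls $\rho(\cdot, 1) \geq \rho$ for integer vectors, and our transferred anti-concentration, which naturally lives at scale $n^b$ due to the approximation error in Lemma \ref{lem:nearinteger}. This is precisely what forces the bound to split into two contributions: the sparse term absorbs the integer blocks whose natural scale is comparable to $n^b$, while the counting term handles the remaining, genuinely lattice-structured vectors. The most delicate part is tuning $m, l, p$ so that the admissibility conditions of Theorem \ref{thm:counting} (namely $1000K \leq l \leq \sqrt{m} \leq m \leq n/\log n$ and $\rho \geq C_{\ref{thm:counting}}\max\{e^{-m/l}, m^{-l/4}\}$) are compatible with the full $\ell$-range permitted by $\alpha \geq \exp(-n^{2/15})$, while still producing the clean exponent $n$ in the second term.
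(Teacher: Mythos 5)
The main gap is where you think the first term $2^{2 n^{1-b}\beta^{-1}}$ comes from. You attribute it to a ``sparse regime'' in which many $|D_i|$ lie near the lower endpoint $c_{\ref{lem:nearinteger}}\alpha/n$ and claim this forces $\|\Bv_i'\|_\infty \lesssim n^b$. That implication fails: the prefactor $\eta^{-1}T^{-j}$ is at least $T = c_{\ref{lem:constant}}^{-1}\|M\|\alpha^{-1}n\,n^b\beta^{-1/2}$, so even at the bottom of the admissible range one has $\eta^{-1}T^{-j}|D_i| \geq T\cdot c_{\ref{lem:nearinteger}}\alpha/n \gg n^b$, and the integer approximation $\Bv_i'$ of $\eta^{-1}T^{-j}D_i\Bv_i$ can have arbitrarily large sup-norm up to $T^{J+1}\exp(n^{2b})$. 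So the dyadic split over $D_i$-magnitudes does not produce a tractable ``sparse'' subfamily, and there is no need for it.

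In the paper there is no decomposition of $\widetilde{\BR_{j,\ell}}(\alpha)$: both terms come directly from the two-term bound in Theorem \ref{thm:counting}. After transferring anti-concentration at the \emph{uniform} scale $2n^{2b}$ (rather than a $D_i$-dependent scale $|D_i|+s$), a single pigeonhole step $\rho(\Bv_i',1) \geq \rho(\Bv_i',2n^{2b})/(4n^{2b})$ brings you from scale $n^{2b}$ down to scale $1$ at only a polynomial cost, placing each block in $\BS_{\rho}$ with $\rho = 2^{-\ell}/(8n^{2b})$. Applying Theorem \ref{thm:counting} per block with $m = n^{7b}$, $l = n^{2b}\log n$, $p = 2^{n^{1-8b}}$ gives $(5np^2/m)^m \leq 2^{2n^{1-b}}$ as the first term and $(C_{\ref{thm:counting}}\rho^{-1}/\sqrt{m/l})^{\beta n} \approx (8C_{\ref{thm:counting}}2^\ell/n^{b/2})^{\beta n}$ as the second; the product over the $\beta^{-1}$ blocks yields the stated bound. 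This is also why your proposed $\sqrt{m/l}\sim n^{b/2}$ is off by $n^{2b}$: the pigeonhole factor enters $\rho^{-1}$, and one needs $\sqrt{m/l}\approx n^{5b/2}$ to cancel it. The injectivity of $\varphi_p$, which you correctly flag as necessary, follows from $\|\Bv'\|_\infty \leq T^{J+1}\exp(n^{2b}) \leq p$ under the hypothesis $\|M\| \leq \exp(n^{1-14b})$, as in the paper.
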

\begin{proof}
	Let $\Bv' \in \widetilde{\BR_{j,\ell}}(\alpha)$ and $\Bv \in \BR_{j, \ell}(\alpha)$ such that 
	\[
	\|\Bx_i- \Bv_i'\| \leq n^{b}
	\] 
	where $\Bx_i = \eta^{-1} T^{-j}  D_i \Bv_i$. 
	Thus,
	\begin{align*}
		\rho(\Bv_i', 2n^{2b}) &\geq \rho(\Bx_i, n^{2b}) - e \cdot \exp\left(- c_{\ref{lem:stability}} n^{2b} \right) \\
		&\geq \rho(\Bv_i, \eta T^j |D_i|^{-1} n^{2b}) - e \cdot \exp\left(- c_{\ref{lem:stability}} n^{2b} \right) \\
		&\geq \rho(\Bv_i, \eta T^j) - e \cdot \exp\left(- c_{\ref{lem:stability}} n^{2b} \right) \\
		&\geq  \rho(\Bv_i, \eta T^j) - e \cdot \exp\left(- c_{\ref{lem:stability}} n^{2b} \right) \\
		&\geq \frac{\rho(\Bv_i, \eta T^j)}{2}.
	\end{align*}
	In the third line, we have used the fact that $|D_i|^{-1} n^{2b} \geq 1$ and $e \cdot \exp\left(- c_{\ref{lem:stability}} n^{2b} \right) \leq \alpha$.  
	
	By the pigeonhole principle, 
	\[
	\rho(\Bv', 1) \geq \frac{\rho(\Bv_i', 2n^{2b})}{4 n^{2b}} \geq \frac{\rho(\Bv_i, \eta T^j)}{8n^{2b}} \geq \frac{2^{-\ell}}{8n^{2b}}.
	\]
	This implies that $\widetilde{\BR_{j,\ell}}(\alpha) \subset \BS_{\frac{2^{-\ell}}{8n^{2b}}}$ in the notation of Theorem \ref{thm:counting}.  Therefore, applying Theorem \ref{thm:counting}, with $m = n^{7b}$, $l = n^{2b} \log n$ and $p = 2^{n^{1-8b}}$, we have that
	\[
	|\varphi_p(\BS_{\frac{2^{-\ell}}{8n^{2b}}})| \leq \left( \frac{5\beta n 2^{2n^{1- 8b}}}{n^{7b}} \right)^{n^{7b}} + \left( \frac{8 C_{\ref{thm:counting}}  2^{\ell} }{n^{b/2}} \right)^{\beta n} \leq 2^{2 n^{1-b}}+ \left( \frac{8 C_{\ref{thm:counting}} 2^{\ell} }{n^{b/2}} \right)^{\beta n}
	\]  
	
	Note that 
	\begin{align*}
		\|\Bv'\|_\infty &\leq \eta^{-1} \|D\|_\infty \\
		&\leq T^{J+1} \exp(n^{2b}) \\
		&\leq (\|M\| \alpha^{-1} n^3)^{\log(\alpha^{-1})} \exp(n^{2b}) \\
		&\leq p
	\end{align*}
	since $\|M\| \leq \exp(n^{1-14b})$.  This implies that $\varphi_p$ is injective.  
	Finally, we take the product net over all $i$.
\end{proof}

We now control the small-ball probability for vectors in $\widetilde{\BR_{j,\ell}(\alpha)}$.

\begin{lemma}\label{lem:smallballR}
	For any $\Bv' \in \widetilde{\BR_{j,\ell}(\alpha)}$,
	\[
	\rho(\Bv', 3 \|M\| n^{b} \beta^{-1/2}) \leq \min\{1-c_{\ref{lem:constant}}, \}.
	\]
\end{lemma}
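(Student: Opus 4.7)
My plan is to prove the stated bound by handling the two terms in the minimum separately; the first is essentially a trivial consequence of the nonzero $\ell^{2}$-norm of $\Bv'$, while the second exploits the defining properties of $\widetilde{\BR_{j,\ell}}(\alpha)$ and the choice of the index $j$ from Lemma \ref{lem:j}.

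For the $1 - c_{\ref{lem:constant}}$ bound, the key observation is that some block $\Bv'_i$ must have $\|\Bv'_i\|_2$ large enough that $3\|M\| n^{b}\beta^{-1/2} \leq c_{\ref{lem:constant}} \|\Bv'_i\|_2$. This follows from combining the near-integer approximation of Lemma \ref{lem:nearinteger} (which forces the existence of a block where $D_i \Bv_i$ is nontrivial compared with the error $n^b$) with the fact that $\Bv \in \BR_{j,\ell}(\alpha)$ is itself not too small on some block; scaling the unit vector statement of Lemma \ref{lem:constant} and invoking Lemma \ref{lem:restriction} then yields the claim.

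For the second bound (which, judging by the role this lemma plays in the union bound above, must decrease in $\ell$ like a constant multiple of $n^{b/4}\, 2^{-\ell}$), I would first apply Lemma \ref{lem:restriction} to reduce to $\rho(\Bv'_1, r)$ where $r = 3\|M\| n^{b}\beta^{-1/2}$. The definition of $\widetilde{\BR_{j,\ell}}(\alpha)$ supplies $\Bv \in \BR_{j,\ell}(\alpha)$, a scalar $D_1 \in [c_{\ref{lem:nearinteger}}\alpha/n, n^{b}]$, and $\Bx_1 = \eta^{-1} T^{-j} D_1 \Bv_1$ with $\|\Bv'_1 - \Bx_1\|_2 \leq n^{b}$. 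Applying the stability estimate Lemma \ref{lem:stability} with $s = n^{3b/2}$ transfers the small-ball bound from $\Bx_1$ to $\Bv'_1$ up to an additive error $e\cdot \exp(-c_{\ref{lem:stability}} n^{b})$, which is negligible compared with $2^{-\ell} \geq \alpha/2 \geq \exp(-n^{2/15})/2$. Scale invariance then gives $\rho(\Bx_1, r+s) = \rho(\Bv_1, \eta T^j |D_1|^{-1}(r+s))$. The choice $T = c_{\ref{lem:constant}}^{-1}\|M\|\alpha^{-1} n\, n^{b}\beta^{-1/2}$ was made precisely so that $|D_1|^{-1}(r+s) \leq T$, i.e.\ the argument is at most $\eta T^{j+1}$. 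The defining property of $j$ from Lemma \ref{lem:j} and the membership $\Bv \in \BR_{j,\ell}$ finish the estimate: $\rho(\Bv_1, \eta T^{j+1}) \leq n^{b/4}\rho_{\beta}(\Bv, \eta T^j) \leq n^{b/4}\cdot 2^{-\ell}$.

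The main obstacle is the bookkeeping in verifying that $|D_1|^{-1}(r+s) \leq T$: the lower bound on $|D_1|$, the factor $\|M\| n^{b}\beta^{-1/2}$ in $r$, the exact form of $T$, and the perturbation parameter $s$ all have to fit together with constants to spare. In particular, one needs the Lemma \ref{lem:stability} error $\exp(-c_{\ref{lem:stability}} s^2/\|\Bv'_1 - \Bx_1\|^2)$ to be smaller than $n^{b/4} 2^{-\ell}$, which is why $s$ must be chosen polynomially larger than the approximation radius $n^{b}$, and why one needs $2^{-\ell}$ controlled from below by $\alpha$, in turn controlled from below by $\exp(-n^{2/15})$. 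Once these inequalities are arranged, the combination of the two bounds yields the stated minimum.
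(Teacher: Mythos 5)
Your strategy for the second bound (the one decreasing in $\ell$) follows the paper's: apply Lemma~\ref{lem:stability} to pass from $\Bv_1'$ to $\Bx_1$, use scale invariance of the L\'evy concentration function, check that the resulting radius is at most $\eta T^{j+1}$, and then invoke Lemma~\ref{lem:j}. However, your choice of the perturbation parameter $s = n^{3b/2}$ is too small and the argument does not close. The additive error in Lemma~\ref{lem:stability} is $e\cdot\exp\bigl(-c_{\ref{lem:stability}}\,s^2/\|\Bv_1'-\Bx_1\|_2^2\bigr)$; with $\|\Bv_1'-\Bx_1\|_2\le n^b$ and $s=n^{3b/2}$ this is $e\cdot\exp(-c_{\ref{lem:stability}} n^{b})$. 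You need this to be at most (a constant times) $n^{b/4}2^{-\ell}$, and $2^{-\ell}$ can be as small as roughly $\alpha\ge\exp(-n^{2/15})=\exp(-n^{2b})$. Since $n^b = o(n^{2b})$, we have $\exp(-c_{\ref{lem:stability}} n^{b}) \gg \exp(-n^{2b})$ for large $n$, so the stability error actually dominates the main term rather than being negligible, and the claimed bound $2 n^{b/4}2^{-\ell}$ is not established. You need $s^2/n^{2b}\gg n^{2b}$, i.e.\ $s$ at least on the order of $n^{2b}$. The paper instead takes $s=\|M\| n^{b}\beta^{-1/2}$ (this is exactly the slack in passing from radius $3\|M\| n^b\beta^{-1/2}$ to $4\|M\| n^b\beta^{-1/2}$), and using the standing reduction $\|M\|\ge C_{\ref{lem:opnorm}}\sqrt{n}$ gives $s^2/n^{2b}\gtrsim n^{1+3b}\gg n^{2b}$, which comfortably beats $\alpha$. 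Your verification that $|D_1|^{-1}(r+s)\le T$ is fine and matches the paper's; the only missing piece is the size of $s$.

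For the $1-c_{\ref{lem:constant}}$ bound, your proposed route (show $3\|M\| n^b\beta^{-1/2}\le c_{\ref{lem:constant}}\|\Bv_i'\|_2$ and scale Lemma~\ref{lem:constant}) differs in form from the paper's, which runs the identical stability/scaling chain as for the second bound but replaces $T^{j}$ by the worst case $T^{J}$ and $|D_i|^{-1}$ by its upper bound $c_{\ref{lem:nearinteger}}^{-1}\alpha^{-1}n$, shrinking the radius in $\rho(\Bv_i,\cdot)$ to order $c_{\ref{lem:constant}}$ before invoking Lemma~\ref{lem:constant}. The two are morally equivalent (both ultimately rest on Lemma~\ref{lem:constant} applied at the correct scale), but your version as stated leaves the quantitative lower bound on $\|\Bv_i'\|_2$ asserted rather than derived, whereas the paper gets the needed scale directly from the definitions of $T$, $\eta$, and the range of $D_i$.
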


\begin{proof}  We begin with the first half of the inequality.
	\begin{align*}
		\rho(\Bv_i', 3 \|M\| n^{b} \beta^{-1/2}) &\leq \rho(\Bx_i, 4 \|M\| n^{b} \beta^{-1/2}) + e \cdot \exp\left(- \frac{c_{\ref{lem:stability}} \|M\|^2 n^{2b}  }{\beta n^{2b}} \right) \\
		&\leq \rho(\eta^{-1} T^{-j}  D_i \Bv_i, 4 \|M\| n^{b} \beta^{-1/2}) + e \cdot \exp\left(- \frac{c_{\ref{lem:stability}} \|M\|^2 }{\beta} \right) \\
		&\leq \rho( \Bv_i, 4 \|M\| n^{b} \beta^{-1/2} \eta T^{j}  |D_i|^{-1}) + e \cdot \exp\left(- \frac{c_{\ref{lem:stability}} \|M\|^2 }{\beta} \right) \\
		&\leq \rho( \Bv_i, 4 \|M\| n^{b} \beta^{-1/2} \eta T^{J}  |D_i|^{-1}) + e \cdot \exp\left(- \frac{c_{\ref{lem:stability}} \|M\|^2 }{\beta} \right) \\
		&\leq \rho( \Bv_i, 4 \|M\| n^{b} \beta^{-1/2} \eta T^{J}  c_{\ref{lem:nearinteger}}^{-1} \alpha^{-1} n) + e \cdot \exp\left(- \frac{c_{\ref{lem:stability}} \|M\|^2 }{\beta} \right) \\
		&\leq 1 - c_{\ref{lem:constant}}
	\end{align*}
	Here we are using that $4 \|M\| n^{b} \beta^{-1/2} \eta T^{J} \alpha^{-1} n < T^{J+1} c_{\ref{lem:constant}}$.  This is due to our choice of defining $T = c_{\ref{lem:constant}}^{-1} \|M\| \alpha^{-1} n n^b \beta^{-1/2}$ and by assumption, $\eta \leq T^{-(J+1)}$.   
	
	On the other hand,
	\begin{align*}
		\rho(\Bv_i', 3 \|M\| n^{b} \beta^{-1/2}) &\leq \rho( \Bx_i, 4 \|M\| n^{b} \beta^{-1/2}) + e \cdot \exp\left(- \frac{c_{\ref{lem:stability}} \|M\|^2 n^{2b} }{\beta n^{2b}} \right) \\
		&\leq \rho( \eta^{-1} T^{-j}  D_i \Bv_i, 4 \|M\| n^{b} \beta^{-1/2}) + e \cdot \exp\left(- \frac{c_{\ref{lem:stability}} \|M\|^2  }{\beta } \right) \\
		&\leq \rho(  \Bv_i, 4 \|M\| n^{b} \beta^{-1/2} \eta T^{j}  |D_i|^{-1}) + e \cdot \exp\left(- \frac{c_{\ref{lem:stability}} \|M\|^2  }{\beta } \right) \\ 
		&\leq \rho(  \Bv_i, \eta T^{j+1}) + e \cdot \exp\left(- \frac{c_{\ref{lem:stability}} \|M\|^2  }{\beta } \right) \\ 
		&\leq n^{b/4} \rho( \Bv_i, \eta T^{j})  + e \cdot \exp\left(- \frac{c_{\ref{lem:stability}} \|M\|^2  }{\beta } \right) \\ 
		&\leq 2 n^{b/4} 2^{-\ell}
	\end{align*}
\end{proof}

We now have all the ingredients to prove Proposition \ref{prop:rich}.
\begin{proof}[Proof of Proposition \ref{prop:rich}]  By the deductions above, 
	\begin{align*}
		\P(\exists \Bv \in &\BR_{j, \ell}(\alpha): \|M_n \Bv\|_2 \leq \eta) \leq \sum_{\Bv' \in \widetilde{\BR_{j,\ell}(\alpha)}} \rho(\Bv_i', 3 \|M\| n^{b} \beta^{-1/2})^{n - \beta n - n^{2b}} + 2 \exp(-n) \\
		&\leq |\widetilde{\BR_{j,\ell}(\alpha)}| \left(\min\{1-c_{\ref{lem:constant}}, 2 n^{b/4} 2^{-\ell}\} \right)^{n - \beta n - n^{2b}}  + 2 \exp(-n) \\
		&\leq C_{\ref{thm:counting}}\left( 2^{2 n^{1-b} \beta^{-1}}+ \left( \frac{8 C_{\ref{thm:counting}} 2^{\ell} }{n^{b/2}} \right)^{ n}\right) \left(\min\{1- c_{\ref{lem:constant}}, 2 n^{b/4} 2^{-\ell}\} \right)^{n - \beta n - n^{b/2}} + 2 \exp(-n) \\
		&\leq C_{\ref{thm:counting}} \left(  2^{2 n^{1-b} \beta^{-1}} (1 - c_{\ref{lem:constant}})^{n - \beta n - n^{2b}} + \left( \frac{8 C_{\ref{thm:counting}}  2^{\ell} }{n^{b/2}} \right)^{ n} (2 n^{b/4} 2^{-\ell})^{n - \beta n - n^{b/2}}\right) + 2 \exp(-n)\\
		&\leq \left( (16 C_{\ref{thm:counting}})^n  n^{-bn/4} \alpha^{-\beta n - n^{2b}} \right) + \Omega( \exp(-n)) \\
		&\leq (16 C_{\ref{thm:counting}})^n \exp(-bn/4 \log n + n^{2 b} \beta n +  n^{3b/2}) + \Omega( \exp(-n)) \\
		&\leq O(\exp(-\Omega(n)) ).
	\end{align*}
	The last line holds due to our setting $\beta = n^{-3b}$.  
	
	It remains to prove this result when $\|M\| \leq C_{\ref{lem:opnorm}} \sqrt{n}$.  In fact, this case is significantly easier.  As now $\|M_n\|$ is on the order of $\|N_n\|$ one can replace our definition of $T$ with $T' = c_{\ref{lem:constant}}^{-1} \alpha^{-1} n^{3/2} n^b \beta^{-1/2} $ where  we have simply replaced $\|M\|$ with $\sqrt{n}$.  The identical argument above then goes through.  We omit the details. 
\end{proof}

\section{Structure of Eigenvectors }
In this section, we prove the following theorem about the eigenvectors of $M_n$.

\begin{theorem} \label{thm:eigenvector}
	Let $\|M\| \leq \exp(n^{1/16})$.  
	For $\alpha  \geq  \exp(-n^{2/15})$ and $\nu = (C_{\ref{prop:rich}}\|M\| \alpha^{-1} n^{7/6})^{-4 \frac{\log(\alpha^{-1})}{\log n}}$ we have that
	\[
	\P(\exists \text{ an eigenvector of } M_n \text{ in }\BR_{\nu}(\alpha)  ) \leq C_{\ref{thm:eigenvector}} \exp(-c_{\ref{thm:eigenvector}} n)
	\]
	where $C_{\ref{thm:eigenvector}} >1$ and $c_{\ref{thm:eigenvector}}$ depend only on $\xi$.  
\end{theorem}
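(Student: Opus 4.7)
The plan is to reduce Theorem~\ref{thm:eigenvector} to Proposition~\ref{prop:rich} via a net argument on the possible eigenvalues. The key observation is that if $\Bv$ is a unit eigenvector of $M_n$ with eigenvalue $\lambda$, then $\Bv$ lies in the kernel of the shifted matrix $M_n - \lambda I_n = (M - \lambda I_n) + N_n$, whose random part is still $N_n$ and whose deterministic part $M - \lambda I_n$ is symmetric. So if $\lambda$ were deterministic, Proposition~\ref{prop:rich} applied to $M_n - \lambda I_n$ would immediately imply that $\Bv$ is poor with probability $1 - C \exp(-cn)$. The obstacle is that $\lambda$ is itself random and depends on $N_n$, so a discretization is needed.

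First I would condition on the event $\|N_n\| \leq C_{\ref{lem:opnorm}}\sqrt{n}$, which costs at most $2\exp(-n)$ in probability by Lemma~\ref{lem:opnorm}. On this event all eigenvalues of $M_n$ lie in the interval $I := [-2\|M\|, 2\|M\|]$ for $n$ large. I would then cover $I$ by a $\nu/2$-net $\Lambda$ of size $|\Lambda| \leq 8\|M\|/\nu$. For each $\lambda' \in \Lambda$, the shifted deterministic part satisfies $\|M - \lambda' I_n\| \leq 3\|M\| \leq \exp(n^{1/15})$ (since $\|M\| \leq \exp(n^{1/16})$ with ample slack), so Proposition~\ref{prop:rich} applies to $(M - \lambda' I_n) + N_n$ with threshold $\nu$, giving
\[
\P\bigl(\exists\, \Bv \in \BR_\nu(\alpha): \|(M_n - \lambda' I_n)\Bv\|_2 \leq \nu\bigr) \leq C_{\ref{prop:rich}} \exp(-c_{\ref{prop:rich}} n);
\]
the harmless factor of $3$ in the shifted norm is absorbed into $C_{\ref{prop:rich}}$. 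A union bound over $\Lambda$ then controls all $\lambda' \in \Lambda$ simultaneously. Finally, for any genuine eigenpair $(\lambda, \Bv)$ of $M_n$ with $\Bv \in \BR_\nu(\alpha)$, pick $\lambda' \in \Lambda$ with $|\lambda - \lambda'| \leq \nu/2$; then $\|(M_n - \lambda' I_n)\Bv\|_2 = |\lambda - \lambda'| \leq \nu$, so $\Bv$ is captured by the above event at that $\lambda'$.

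The main obstacle is purely quantitative: for the union bound to survive the exponential decay, I need $\log|\Lambda| = O(\log(\|M\|/\nu)) = o(n)$. The first piece $\log\|M\|$ is at most $n^{1/16}$; for the second, the formula for $\nu$ gives $\log(1/\nu) = \tfrac{4\log(\alpha^{-1})}{\log n} \log(C_{\ref{prop:rich}} \|M\| \alpha^{-1} n^{7/6})$, which under the hypotheses $\alpha \geq \exp(-n^{2/15})$ and $\log \|M\| \leq n^{1/16}$ is $O(n^{4/15}/\log n) = o(n)$. Thus $|\Lambda| \leq \exp(o(n))$, and multiplying by $C_{\ref{prop:rich}} \exp(-c_{\ref{prop:rich}} n)$ still yields a bound of the form $C_{\ref{thm:eigenvector}} \exp(-c_{\ref{thm:eigenvector}} n)$ after adding back the $2\exp(-n)$ from the initial conditioning. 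The gap between the exponents $1/16$ (for the operator norm) and $2/15$ (for the probability $\alpha$) is calibrated precisely to make this net union bound go through with room to spare.
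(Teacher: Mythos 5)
Your proposal is correct and follows the same route as the paper: condition on the operator-norm event via Lemma~\ref{lem:opnorm}, discretize the eigenvalue interval with a $\nu$-scale net, apply Proposition~\ref{prop:rich} to each shifted matrix $M_n-\lambda' I_n$, and union bound, with the entropy of the net controlled by the hypotheses $\|M\|\le\exp(n^{1/16})$ and $\alpha\ge\exp(-n^{2/15})$. The only minor differences are cosmetic (a $\nu/2$ net rather than $\nu$, and your explicit note that the factor from $\|M-\lambda' I_n\|\le 3\|M\|$ is absorbed into the constant, which the paper leaves implicit).
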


\begin{proof}
Let $\EE$ be the event that $\| M_n \| \leq \|M\| + C_{\ref{lem:opnorm}} \sqrt{n}$.  By Lemma \ref{lem:opnorm},
\[
\P(\EE) \geq 1 - 2 \exp(-n).
\]
Note that for $\lambda \in \R$ with $|\lambda| \leq \|M\| + C_{\ref{lem:opnorm}} \sqrt{n}$,
\[
\| M_n + \lambda\| \leq 2 \exp(n^{1/16}) + 2 C_{\ref{lem:opnorm} }\sqrt{n} \leq \exp(n^{1/15}).
\]
Thus, we may apply Proposition \ref{prop:rich} to conclude that 
any vector $\Bv$ such that $ \| (M_n - \lambda) v\|_2 \leq \nu$ has $\rho(\Bv) \leq \alpha$ with probability at least $1 - \exp(-n)$.  
As this applies to any $\lambda \in [-\|M\| -C_{\ref{lem:opnorm}} \sqrt{n}, -\|M\| + C_{\ref{lem:opnorm}}]$, we can deduce the structure of an eigenvector via a net argument. 
We use an $\nu$ net of the interval $[-\|M\| -C_{\ref{lem:opnorm}} \sqrt{n}, -\|M\| + C_{\ref{lem:opnorm}}]$ which can be chosen to have size $2 (\|M\| + C_{\ref{lem:opnorm}} \sqrt{n}) \nu^{-1} = o(\exp(n))$. On the event $\mathcal{E}$, for any eigenvector $v$ of $M_n$ and corresponding eigenvalue $\lambda$, we let $\lambda_0$ be the nearest point in the net to $\lambda$.  We then have that
\[
\|(M_n - \lambda_0) v\|_2 \leq \|(M_n - \lambda) v\| + \|(\lambda - \lambda_0) v\| \leq \nu. 
\]      
Therefore, by a simple union bound over the points in the net, we conclude that no eigenvector can reside in $\R(\alpha)$.   
\end{proof}

\section{Eigenvalue Gaps}
Now we complete the proof of Theorem \ref{thm:main}.  Using the notation of Section \ref{sec:strategy}, we have that 
\[
\P(|\lambda_{i+1}(M_{n}) - \lambda_i(M_n)| \leq \nu/\sqrt{n}) \leq n \P(\mathbf{w}^T \mathbf{x} \leq \nu).
\] 
By Theorem \ref{thm:eigenvector}, if we let $\alpha \geq  \exp(-n^{2/15})$ and  $\nu = (C_{\ref{prop:rich}}\|M\| \alpha^{-1} n^{7/6})^{-4 \frac{\log(\alpha^{-1})}{\log n}}$ then
\[
\rho(\mathbf{w}, \nu) \leq \alpha
\]
with probability at least $1 - C_{\ref{thm:eigenvector}} \exp(-c_{\ref{thm:eigenvector}} n)$.  As $\Bx$ is independent of $\Bw$, we find that
\[
\P(|\lambda_{i+1}(M_{n}) - \lambda_i(M_n)| \leq \nu/\sqrt{n}) \leq n (\alpha + C_{\ref{thm:eigenvector}} \exp(-c_{\ref{thm:eigenvector}} n)).
\] 
One final union bound over $i$ yields the tail bound for the smallest eigenvalue gap and  concludes the proof upon adjusting $C_{\ref{thm:main}}$.

\section*{Acknowledgements}
	The authors would like to thank Spencer Shortt and Vishesh Jain for helpful discussions.

\bibliographystyle{plain}
\bibliography{perturbed}

\end{document}